\newcommand{\bbc}{\mathbb{C}}
\newcommand{\bbz}{\mathbb{Z}}
\newcommand{\bbcd}{\mathbb{C}^{d}}
\newcommand{\bbzd}{\mathbb{Z}^d}
\newcommand{\bbcq}{\mathbb{C}_q}
\newcommand{\bbcqa}{\mathbb{C}_q^{(1)}}
\newcommand{\bbcqb}{\mathbb{C}_q^{(2)}}
\newcommand{\smn}{\sigma(m, n)}
\newcommand{\snm}{\sigma(n, m)}
\newcommand{\la}{\lambda}
\newcommand{\fmn}{f(m, n)}
\newcommand{\tp}{T^{\prime}}
\newcommand{\fnm}{f(n, m)}
\newcommand{\rad}{\mathrm{rad}}
\newcommand{\ad}{\mathrm{ad}}
\newcommand{\Der}{\mathrm{Der}}
\newcommand{\mg}{\mathfrak{g}}
\newcommand{\h}{\mathfrak{h}}
\newtheorem{thm}{Theorem}[section]
 \newtheorem{lem}[thm]{Lemma}
 \newtheorem{prop}[thm]{Proposition}
 \theoremstyle{definition}
 \theoremstyle{remark}
 \newtheorem{rem}[thm]{Remark}
 \numberwithin{equation}{section}
\begin{document}

\begin{abstract}
Let $\bbcq$ be the quantum torus associated with the $d \times d$ matrix $q = (q_{ij})$,
where $q_{ij}$ are roots of unity with $q_{ii} = 1$ and $q_{ij}^{-1} = q_{ji}$ for all $1 \leq i, j \leq d.$
Let $\Der(\bbcq)$ be the Lie algebra of all the derivations of $\bbcq$. In this paper
we define the Lie algebra $\Der(\bbcq) \ltimes \bbcq$ and classify its irreducible modules 
with finite dimensional weight spaces. These modules under certain 
conditions turn out to be of the form $V \otimes \bbcq$, 
where $V$ is a finite dimensional irreducible $gl_d$-module.
\end{abstract}

\title[]{The irreducible modules for the derivations of the rational quantum torus}

\author[Rao]{S. Eswara Rao}
\address{School of mathematics, Tata Institute of Fundamental Research,
Homi Bhabha Road, Mumbai 400005, India}
\email[S. Eswara Rao]{senapati@math.tifr.res.in}

\author[Batra]{Punita Batra}
\address{Department of Mathematics, Harish-Chandra Research Institute,
Chhatnag
Road, Jhunsi, Allahabad
211019 INDIA} 
\email[Punita Batra]{batra@hri.res.in}

\author[Sharma]{Sachin S. Sharma}
\address{School of mathematics, Tata Institute of Fundamental Research,
Homi Bhabha Road, Mumbai 400005, India}
\email[Sachin S. Sharma]{sachin@math.tifr.res.in}

\bigskip

\subjclass{17B65, 17B66, 17B68}

\keywords{Quantum torus, derivations, irreducible module, weight module.}

\maketitle \section {Introduction}
Let $A = \bbc[t_1^{\pm 1},\cdots,t_n^{\pm 1}]$ be the Laurent polynomial ring in $d$ 
commuting variables. Let $\Der(A)$ be the Lie algebra of diffeomorphisms of 
$d$-dimensional torus. It is well known that $\Der(A)$ is isomorphic to 
the derivations of Laurent polynomial ring $A$ in $d$-commuting variables. G.Shen \cite{SGY} and 
Rao \cite{RE1} gave several irreducible representation of $\Der(A)$. In a paper by 
Jiang and Meng \cite{JM}, it has been proved that classification of 
irreducible integrable modules of the full toroidal Lie algebra 
can be reduced to the classification of irreducible 
$\Der(A) \ltimes A$-modules. 
Rao \cite{RE} has given a classification of irreducible modules for 
$\Der(A) \ltimes A$ with finite dimensional weight spaces and associative action of $A$. 
These modules are of the form $V \otimes A$, where $V$ is a finite dimensional irreducible 
$gl_d$-module. Building on the work of \cite{JM} and \cite{RE}, Rao and Jiang \cite{RJ} gave 
the classification of the irreducible integrable modules for the full toroidal Lie algebra.

Let $\bbcq$ be a quantum torus associated with the $d \times d$ matrix $q = (q_{ij})$, where
$q_{ij}$ are roots of unity with $q_{ii} = 1$, $q_{ij}^{-1} = q_{ji}$ for all $1 \leq i, j \leq d.$
Let $\Der(\bbcq)$ be the Lie algebra of all the derivations of $\bbcq$. In \cite{ST} W.Lin and 
S.Tan defined a functor from $gl_d$-modules to $\Der(\bbcq)$-modules. They proved that 
for a finite dimensional irreducible $gl_d$-module $V$, $V \otimes \bbcq$ is a completely 
reducible $\Der(\bbcq)$-module  except finitely many cases and hence generalised
Rao's work \cite{RE1} for the quantum case. Liu and Zhao \cite{KZ} completed the study of these
modules by proving that the ``function $g(s)$'' defined in \cite{ST} can be 
taken as a constant function $1$ and made the structure of these modules completely clear. 
In this paper we study the representations of the Lie algebra $\Der(\bbcq) \ltimes \bbcq$.

We now give more details of the paper. Let $W$ be the Lie algebra of the 
outer derivations of $\bbcq$. Let $\bbcq^{(1)}$ and $\bbcq^{(2)}$ be the copies of 
$\bbcq$ contained in the Lie algebra $\Der(\bbcq) \ltimes \bbcq$ such that $[\bbcq^{(1)},\bbcq^{(2)}] = 0$,
$\bbcq^{(1)} \cap \bbcq^{(2)} = Z(\bbcq)$ and $\Der(\bbcq) \ltimes \bbcq = W \ltimes (\bbcq^{(1)} + \bbcq^{(2)})$.
The $\Der(\bbcq)$-module $V \otimes \bbcq$ 
is an irreducible $\Der(\bbcq) \ltimes \bbcq$-module with associative
action of $\bbcq^{(1)}$ and anti associative action of $\bbcq^{(1)}$(Proposition \ref{prop1}).
The action is associative on the intersection $Z(\bbcq)$.
The main goal of this paper is to prove the converse of Proposition \ref{prop1} (Theorem \ref{prop3}) . 

The paper is organised as follows. 
In section \ref{sec1} we begin with the definition and properties of the quantum torus $\bbcq$.
We define $\Der(\bbcq)$ action on $\bbcq$ and bracket operations on $\Der(\bbcq) \ltimes \bbcq$  
 (Prop.\ref{pr1} and Prop.\ref{pr2}). Section \ref{sec2} and section \ref{sec3}
are devoted to the proof of the Theorem \ref{prop3}. In section \ref{sec2} we compute
the actions of outer derivations of $\Der(\bbcq)$ and $\bbcq$ on $V^{\prime}$. In
section \ref{sec3} we derive the action of inner derivations on $V^{\prime}$ and complete 
the proof.

\section{Preliminaries}\label{sec1}
Let $q = (q_{ij})_{d \times d}$ be any $d \times d$ matrix with nonzero complex entries
satisfying $q_{ii} = 1$, $q_{ij}^{-1} = q_{ji}$, $q_{ij}$ are roots of unity for all $1 \leq i, j \leq d.$
Let us consider the non-commutative Laurent polynomial ring
$S_{[d]} = \bbc[t_{1}^{\pm 1},\cdots,t_{d}^{\pm 1}]$. Let $J_q$ be the two sided ideal of
$S_{[d]}$ generated by the elements $\{ t_i t_j = q_{ij}t_j t_i, t_i t_i^{-1} -1, t_i^{-1}
t_i - 1 \,\, \forall \, \,1 \leq i, j \leq d \}$.
Let $\bbcq = S_{[d]} / J_q$. Then $\bbcq$ is called the quantum torus associated with the matrix $q$. The 
matrix $q$ is called the quantum torus matrix.

For $n = (n_1, \cdots , n_d) \in \bbzd$, let $t^{n} = t_{1}^{n_1} \cdots t_{d}^{n_d}$.
Define $\sigma , f : \bbzd \times \bbzd \rightarrow \bbc^{\ast}$ by
$$\sigma(n, m) = \prod_{1 \leq i < j \leq d}{q_{ji}^{n_j m_i}}, \, \,f(n, m) = \sigma(n, m) \sigma(m,n)^{-1}.$$
Then one has the following results \cite{KRY}:
\begin{enumerate}
 \item $\sigma(n+m, s+r) = \sigma(n, s)\sigma(n, r)\sigma(m, s) \sigma(m, r).$
 \item $\fnm = \fmn^{-1} , \, \, f(n, n) = f(n, -n) = 1.$
 \item $ f(n+m, s+r) = f(n, s)f(n, r)f(m, s)f(m, r).$
 \item $ t^{n}t^{m} = \snm t^{n+m}, \, \, [t^n, t^m] = (\snm - \smn)t^{n+m},\\
t^{n}t^{m} = f(n,m)t^{m}t^{n}, \, \, \forall \, n, m, r, s \in \bbzd .$
\end{enumerate}
For $f$, let $\rad(f)$ denote the radical of $f$ which is defined by
$$\rad(f) = \{n\in \bbzd : \fnm = 1 \,\, \forall \, m \in \bbzd \} .$$
It is easy to see that $\rad(f)$ is a subgroup of $\bbzd$. As $\bbcq$ is
 $\bbzd$-graded, we define derivations $\partial_1, \partial_2, \cdots, \partial_d$ satisfying
$$\partial_i (t^n) = n_i t^n  \,\,\mathrm{for}\,\, n = (n_1, n_2, \cdots, n_d) \in \bbzd.$$
The inner derivations $\ad \,t^n (t^m) = (\snm - \smn)t^{n+m}$.
Note that for $n \in \rad(f)$, $\ad\, t^n = 0$. For $u = (u_1, u_2,\cdots, u_d)\in \bbcd$, 
define $D(u, r) = t^{r}\sum_{i = 1}^{d}{u_i \partial_i}$.

Let $\Der(\bbcq)$ be the space of all derivations of $\bbcq$. Let $\Der(\bbcq)_n$ denote
the set of homogeneous derivations of $\bbcq$ with degree $n$. Then we have the following lemma:

\begin{lem}[\cite{KRY},Lemma 2.48]
 \begin{enumerate}
  \item $\Der(\bbcq) = \bigoplus_{n \in \bbzd} \Der(\bbcq)_n $
  \item \begin{equation*}
 \Der(\bbcq)_n = \left\{
  \begin{array}{l l}
    \bbc \ad \, t^n  & \quad \,\mathrm{if} \,\,\, n \notin \rad (f) \\
    \oplus_{i =1}^{d}{\bbc \, t^{n} \partial_i} &  \quad  \, \mathrm{if} \,\, \,n \in \rad (f)  . \\
  \end{array} \right.
 \end{equation*}
 \end{enumerate}
\end{lem}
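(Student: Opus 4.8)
The plan is to establish the two parts in turn: part (1) by the standard graded-decomposition argument for derivations of a $\bbzd$-graded algebra, and part (2) by turning the derivation identity into a functional equation for a single scalar-valued function and solving it. For part (1), I would use that $\bbcq$ is generated as an algebra by $t_1^{\pm 1},\dots,t_d^{\pm 1}$, so a derivation $D$ is determined by the finitely many elements $D(t_i)$. For each $n\in\bbzd$ define $D_n$ on a homogeneous $x$ of degree $m$ to be the degree-$(m+n)$ component of $D(x)$, extended linearly; the grading of the product gives $D_n(xy)=D_n(x)y+xD_n(y)$, so each $D_n$ is a homogeneous derivation of degree $n$. Since each $D(t_i)$ has finitely many homogeneous components, only finitely many $D_n$ act nontrivially on the generators, and a derivation vanishing on all $t_i$ (hence on all $t_i^{-1}$, via $D_n(t_i^{-1})=-t_i^{-1}D_n(t_i)t_i^{-1}$) is zero; thus $D=\sum_n D_n$ is a finite sum, and directness follows by comparing homogeneous components.

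For part (2), fix a homogeneous derivation $D$ of degree $n$. Because $\bbcq=\oplus_m\bbc t^m$, the degree-$(m+n)$ space is $\bbc t^{m+n}$, so $D(t^m)=g(m)\,t^{m+n}$ for a unique $g:\bbzd\to\bbc$. Applying $D$ to $t^mt^{\m}=\sigma(m,\m)t^{m+\m}$ and using the bimultiplicativity of $\sigma$ (item (1) above) to cancel $\sigma(m,\m)$, I obtain the functional equation
\begin{equation*}
g(m+\m)=g(m)\,\sigma(n,\m)+g(\m)\,\sigma(m,n),\qquad m,\m\in\bbzd.
\end{equation*}
Swapping $m$ and $\m$ and subtracting yields $g(m)F(\m)=g(\m)F(m)$, where $F(m)=\sigma(n,m)-\sigma(m,n)$.

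The dichotomy now falls out of whether $F\equiv 0$. If $n\notin\rad(f)$ there is $m_0$ with $f(n,m_0)\neq 1$, hence $F(m_0)\neq 0$, forcing $g=cF$ with $c=g(m_0)/F(m_0)$; since $(\ad\,t^n)(t^m)=(\sigma(n,m)-\sigma(m,n))t^{n+m}$, this says $D=c\,\ad\,t^n$, so $\Der(\bbcq)_n=\bbc\,\ad\,t^n$. If $n\in\rad(f)$ then $\sigma(n,m)=\sigma(m,n)=:\chi(m)$ for all $m$, with $\chi$ a homomorphism $\bbzd\to\bbc^{\ast}$; dividing the functional equation by $\chi(m+\m)=\chi(m)\chi(\m)$ makes $h(m):=g(m)/\chi(m)$ additive, so $h(m)=\sum_i a_im_i$ and $g(m)=\sigma(n,m)\sum_i a_im_i$. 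Comparing with $(t^n\partial_i)(t^m)=m_i\,\sigma(n,m)\,t^{n+m}$ gives $D=\sum_i a_i\,t^n\partial_i$, and since $t^n$ is central exactly when $n\in\rad(f)$ each $t^n\partial_i$ is indeed a derivation, so $\Der(\bbcq)_n=\oplus_{i=1}^d\bbc\,t^n\partial_i$.

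The step I expect to be the crux is the case analysis driven by the swap-symmetry relation $g(m)F(\m)=g(\m)F(m)$: it is what cleanly separates the inner-derivation regime from the $\partial_i$ regime, and it must be paired with the observation that $t^n\partial_i$ fails to be a derivation once $t^n$ is noncentral, which is precisely what makes the stated description an equality rather than a one-sided inclusion.
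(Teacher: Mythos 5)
Your proof is correct, so the main thing to report is a comparison of routes: the paper gives \emph{no} proof of this lemma at all --- it is quoted directly from \cite{KRY} (Lemma 2.48) --- whereas you supply a complete, elementary, self-contained argument, which is a genuine addition rather than a restatement. Your part (1) is the standard decomposition of a derivation of a $\bbzd$-graded, finitely generated algebra into homogeneous components, and it is carried out correctly (finiteness of the sum from finiteness of the generating set, injectivity on generators, directness by comparing components). Your part (2) is the right reduction: since each graded piece of $\bbcq$ is one-dimensional you may write $D(t^m)=g(m)t^{m+n}$, and applying $D$ to $t^mt^{\m}=\sigma(m,\m)t^{m+\m}$ with bimultiplicativity of $\sigma$ gives exactly $g(m+\m)=g(m)\sigma(n,\m)+g(\m)\sigma(m,n)$; the symmetrization $g(m)F(\m)=g(\m)F(m)$ with $F(m)=\sigma(n,m)-\sigma(m,n)$ then cleanly splits the two cases, and the twist by the character $\chi(m)=\sigma(n,m)$ in the radical case correctly reduces $g$ to an additive function. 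Two minor points you should tighten: first, to justify the direct sum $\oplus_{i=1}^d\bbc\,t^n\partial_i$ you should note the operators are linearly independent (evaluate on $t^{e_j}$); second, your closing claim that the equality in the case $n\notin\rad(f)$ ``must be paired with'' the failure of $t^n\partial_i$ to be a derivation is logically misplaced --- your functional-equation argument already proves the inclusion $\Der(\bbcq)_n\subseteq\bbc\,\ad\,t^n$, and the reverse inclusion is automatic, so that failure is a \emph{consequence} of the lemma rather than an ingredient of its proof. Neither point is a gap in the mathematics.
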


The space $\Der(\bbcq)$ is a Lie algebra with the following bracket operations:
\begin{enumerate}
 \item $[\ad\,t^s, \ad\,t^r] = (\sigma(s,r) - \sigma(r,s))\, \ad\, t^{s+r}, \, \, \forall\, r,s \notin \rad(f);$
 \item $[D(u,r),\ad\,t^s] = (u,s)\sigma(r,s)\, \ad\,t^{r+s} ,\, \, \forall \, r \in \rad(f), s \notin \rad(f), u\in \bbcd;$
 \item $[D(u,r),D(u^{\prime},r^{\prime})] = D(w,r+r^{\prime}), \,\, \forall \, r, r^{\prime} \in \rad(f) ,
 u,u^{\prime} \in \bbcd$ and where $w = \sigma(r,r^{\prime})((u,r^{\prime})u^{\prime}-(u^{\prime},r)u).$
\end{enumerate}

\begin{prop} \label{pr1}
 $\bbcq$ is a $\Der(\bbcq)$-module with the following action:
 \begin{enumerate}
  \item $D(u,r).t^n = (u,n)\sigma(r,n)t^{r+n}, \,\, \forall \,\, r\in \rad(f), n\in \bbzd, u\in \bbcd;$
  \item $\ad\, t^s .t^n = (\sigma(s,n)-\sigma(n,s))t^{s+n}, \,\, \forall \,\,s\notin \rad(f), n\in \bbzd .$
 \end{enumerate}
\end{prop}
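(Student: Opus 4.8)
The plan is to verify directly that the two prescribed formulas define an action of the Lie algebra $\Der(\bbcq)$ on $\bbcq$, i.e.\ that $[X,Y].t^n = X.(Y.t^n) - Y.(X.t^n)$ holds for all homogeneous $X, Y \in \Der(\bbcq)$ and all $n \in \bbzd$. First I would note that the formulas are simply the natural action of a derivation on the algebra: by property (4), $\ad\,t^s(t^n) = t^s t^n - t^n t^s = (\sigma(s,n) - \sigma(n,s))t^{s+n}$, and $D(u,r)(t^n) = t^r\sum_{i} u_i \partial_i(t^n) = (u,n)\,t^r t^n = (u,n)\sigma(r,n)t^{r+n}$. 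Since $\{\ad\,t^s : s \notin \rad(f)\}$ together with $\{D(u,r) : r \in \rad(f),\, u \in \bbcd\}$ spans $\Der(\bbcq)$, and since the bracket on $\Der(\bbcq)$ is given by relations (1)--(3) preceding the statement, it suffices to check the module identity in the three cases matching those three relations.

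Case (A) takes $X = \ad\,t^s$, $Y = \ad\,t^r$ with $r,s \notin \rad(f)$. Expanding $\ad\,t^s.(\ad\,t^r.t^n)$ and its mirror image and collapsing the products $\sigma(s,r+n)$, $\sigma(r+n,s)$ via the bihomomorphism identity (1) should reduce the difference to $(\sigma(s,r)-\sigma(r,s))\,\ad\,t^{s+r}.t^n$, matching relation (1). Case (B) takes $X = D(u,r)$, $Y = \ad\,t^s$ with $r \in \rad(f)$, $s \notin \rad(f)$; the same type of expansion, now also using bilinearity of $(\cdot,\cdot)$ to split $(u,s+n) = (u,s)+(u,n)$, should reproduce $(u,s)\sigma(r,s)\,\ad\,t^{r+s}.t^n$, matching relation (2). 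Here, as in Case (C) below, the key auxiliary fact is that $r \in \rad(f)$ means $f(r,m) = 1$, equivalently $\sigma(r,m) = \sigma(m,r)$, for every $m \in \bbzd$; this symmetry is exactly what makes the extraneous terms $\sigma(n,r)$ collapse to $\sigma(r,n)$.

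The main obstacle is Case (C), with $X = D(u,r)$, $Y = D(u',r')$ and $r, r' \in \rad(f)$. Writing out one composite gives
$$D(u,r).(D(u',r').t^n) = (u',n)(u,r'+n)\,\sigma(r',n)\,\sigma(r,r'+n)\,t^{r+r'+n},$$
together with the symmetric term obtained by interchanging $(u,r)$ and $(u',r')$. I would then use the bihomomorphism identity in the forms $\sigma(r,r'+n) = \sigma(r,r')\sigma(r,n)$ and $\sigma(r',r+n) = \sigma(r',r)\sigma(r',n)$, and bilinearity to write $(u,r'+n) = (u,r')+(u,n)$ and $(u',r+n) = (u',r)+(u',n)$. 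The delicate point is that the two ``diagonal'' contributions proportional to $(u,n)(u',n)$ appear with coefficients $\sigma(r,r')$ and $\sigma(r',r)$; these cancel \emph{precisely because} $r,r' \in \rad(f)$ forces $\sigma(r,r') = \sigma(r',r)$, and the same symmetry lets one pull out a common factor $\sigma(r,r')$ from the remaining terms. What survives is the coefficient $\sigma(r,r')\big((u,r')(u',n) - (u',r)(u,n)\big)\sigma(r,n)\sigma(r',n)$, which equals $(w,n)\,\sigma(r+r',n)$ with $w = \sigma(r,r')\big((u,r')u' - (u',r)u\big)$, i.e.\ $D(w,r+r').t^n$ as demanded by relation (3). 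Collecting the three cases establishes that $\bbcq$ is a $\Der(\bbcq)$-module.
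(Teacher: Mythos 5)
Your argument is correct, but it takes a different route from the paper, which in fact gives no proof of Proposition \ref{pr1} at all: from the paper's standpoint the statement is essentially definitional, since $\Der(\bbcq)$ is the space of derivations of $\bbcq$ equipped with the commutator bracket, so $\bbcq$ is tautologically a module over it, and formulas (1) and (2) merely record what $D(u,r)$ and $\ad\,t^s$ do to a monomial $t^n$ --- exactly the observation in your first paragraph, which by itself would already constitute the paper's implicit proof. Your cases (A)--(C) instead treat the displayed bracket relations on $\Der(\bbcq)$ as abstractly given and verify the module identity against them; this is the same computation as deriving those bracket relations from operator commutators (which the paper imports from the literature), so what your version buys is a self-contained consistency check that the listed brackets really are the commutators of the operators defined in (1) and (2), at the cost of three case-by-case expansions. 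The computations themselves are sound: the bihomomorphism identity $\sigma(m+n,p)=\sigma(m,p)\sigma(n,p)$ and the fact that $r\in\rad(f)$ forces $\sigma(r,m)=\sigma(m,r)$ for all $m$ are used correctly, and the cancellation of the diagonal $(u,n)(u',n)$ terms in case (C) via $\sigma(r,r')=\sigma(r',r)$ is exactly right. One boundary case deserves an explicit sentence in case (A): if $s+r\in\rad(f)$, then $\ad\,t^{s+r}$ is not covered by formula (2) of the proposition; but then $\ad\,t^{s+r}=0$ as a derivation, and moreover $\sigma(s,r)=\sigma(r,s)$ (evaluate $f(s+r,r)=1$ and use $f(r,r)=1$), so both sides of the module identity vanish and no case is missed.
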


Consider the space $\Der(\bbcq) \ltimes \bbcq$. We denote its element $(T,t^{n})$ by $T+t^{n}$ where
$T \in \Der(\bbcq)$ and $t^{n} \in \bbcq$ for $n \in \bbzd$.

\begin{prop}\label{pr2}
 $\Der(\bbcq)\ltimes \bbcq$ is a Lie algebra with the following brackets:
 \begin{enumerate}
 \item $[D(u,r),t^n] = (u,n)\sigma(r,n)t^{r+n}, \,\, \forall \, r\in \rad(f), n\in \bbzd, u\in \bbcd;$
 \item $[\ad\, t^s, t^n] = (\sigma(s,n)-\sigma(n,s))t^{s+n}, \,\, \forall s\notin \rad(f), n\in \bbzd;$
 \item $[t^{m},t^{n}] = (\sigma(m,n) - \sigma(n,m))t^{m+n} , \,\, \forall \,\,m,n \in \bbzd .$
 \end{enumerate}
\end{prop}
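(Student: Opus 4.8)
The plan is to recognize $\Der(\bbcq) \ltimes \bbcq$ as the semidirect product of \emph{two} Lie algebras, rather than of a Lie algebra with an abelian ideal. Recall the general principle: if $\mathfrak{g}$ is a Lie algebra and $\rho \colon \mathfrak{g} \to \Der(\mathfrak{a})$ is a homomorphism into the derivation algebra of a Lie algebra $\mathfrak{a}$, then the vector space $\mathfrak{g} \oplus \mathfrak{a}$ is a Lie algebra under
$$[(X, a), (Y, b)] = \bigl([X, Y],\ \rho(X)b - \rho(Y)a + [a, b]\bigr).$$
Here I would take $\mathfrak{g} = \Der(\bbcq)$, already known to be a Lie algebra with the three brackets listed before Proposition \ref{pr1}, and $\mathfrak{a} = \bbcq$ equipped with its commutator bracket $[t^m, t^n] = (\smn - \snm)t^{m+n}$ from item (4) of the preliminaries. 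The three bracket formulas in the statement are then exactly the three components of this construction: formulas (1) and (2) are the action $\rho$ (these coincide with the module action of Proposition \ref{pr1}), while formula (3) is the internal bracket of $\mathfrak{a}$.

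Two ingredients must be verified. First, $\bbcq$ is a Lie algebra under the commutator bracket: since this bracket is simply $t^m t^n - t^n t^m$ in the associative algebra $\bbcq$, the Jacobi identity is immediate from associativity. Second, the action of $\Der(\bbcq)$ on $\bbcq$ from Proposition \ref{pr1} is by derivations of this commutator bracket. This is automatic, since every element of $\Der(\bbcq)$ is by definition a derivation of the associative product, and any derivation $D$ of an associative algebra satisfies
$$D[a, b] = D(ab) - D(ba) = (Da)b + a(Db) - (Db)a - b(Da) = [Da, b] + [a, Db],$$
so it is a derivation of the commutator as well. Because Proposition \ref{pr1} already asserts that $\bbcq$ is a $\Der(\bbcq)$-module, the map $\rho$ is a Lie algebra homomorphism, which completes the hypotheses of the general principle; antisymmetry and bilinearity are built into the formulas.

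The one point that genuinely requires attention is the overlap between the inner derivations $\ad\, t^s \in \Der(\bbcq)$ (for $s \notin \rad(f)$) and the elements $t^s \in \bbcq$: these live in different summands of the semidirect product, yet $\ad\, t^s$ acts on $\bbcq$ the same way $t^s$ does via the commutator. I would confirm the consistency by comparing formula (2) of Proposition \ref{pr1}, namely $\ad\, t^s \cdot t^n = (\sigma(s,n) - \sigma(n,s))t^{s+n}$, with the commutator $[t^s, t^n]$ from item (4); both equal $(\sigma(s,n) - \sigma(n,s))t^{s+n}$, so there is no inconsistency and no hidden relation is forced. With the two ingredients and this consistency check in place, the general semidirect-product theorem delivers the Jacobi identity for every combination at once. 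I expect the main obstacle to be purely organizational: a direct verification would require splitting into many mixed Jacobi cases --- derivation/derivation/element, derivation/element/element, and the internal case --- according to whether each degree lies in $\rad(f)$, whereas the abstract framework above dispatches this case analysis all at once, which is why I would organize the proof around it.
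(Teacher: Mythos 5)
Your proposal is correct. The paper in fact gives no proof of Proposition \ref{pr2} at all --- it is stated as a routine fact, with the semidirect-product notation $\Der(\bbcq)\ltimes\bbcq$ presupposing exactly the general construction you invoke --- and your argument (the action of Proposition \ref{pr1} is by derivations of the associative product, hence of the commutator bracket, so the standard semidirect product of Lie algebras applies) is precisely the verification the paper leaves implicit; your consistency check on $\ad\,t^s$ versus $t^s$ is harmless but not logically required, since the semidirect product imposes no identification between the two copies.
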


Let $\tilde{\h} = \{ D(u,0) : u\in \bbcd\} \oplus \bbc$. Then $\tilde{\h}$ is a maximal abelian subalgebra of $\mg$.
Let $W$ denote the Lie subalgebra of $\Der(\bbcq)$ generated by the elements $D(u,r)$, where
  $u \in \bbcd, r\in \rad(f)$. Let $\mg := \Der(\bbcq)\ltimes \bbcq$ and consider the Lie subalgebras 
  $\bbcqa = \bbcq$ and $\bbcqb = \mathrm{span} \{\ad\,(t^{n}) - t^{n} \mid n\in \bbz^{d}\}$. Then 
  it is easy to see that $\mg = W \ltimes (\bbcqa + \bbcqb)$. As $\bbcqb$ is isomorphic to $\bbcq$
  by a Lie algebra isomorphism $\ad\,(t^{n}) - t^{n} \rightarrow t^{n}$, we see that $\bbcqb$ has
  an associative algebra structure given by $(\ad\,(t^{n}) - t^{n})(\ad\,(t^{m}) - t^{m}) = 
  \sigma(n,m)(\ad\,(t^{n+m}) - t^{n+m})$.

Let $V$ be a finite dimensional irreducible $gl_d$-module and $\alpha \in \bbc^{d}$.  Liu and Zhao \cite{KZ}, 
also see \cite{ST}, proved that $M^\alpha(V) = V \otimes \bbcq$ is a $\Der(\bbcq)$-module with the following actions:
\begin{enumerate}
 \item $\ad\,t^{s}v(n) = (\sigma(s,n) - \sigma(n,s))v(n+s)$ ;
 \item $D(u,r)v(n) = \sigma(r,n)((u,n+\alpha) + r u^{T})v(r+n)$,
 \end{enumerate}
 where $r u^{T} = \sum_{i,j}{r_i u_jE_{ij}}$ for 
 $r = (r_1,\cdots,r_d) \in \bbzd$ and $u = (u_1,\cdots,u_d)^{T}$, 
 and $v(n) := v \otimes t^{n} \in V(n) := V \otimes t^{n}$, $n \in \bbzd, s\notin \rad(f), r\in \rad(f), u, \alpha \in \bbcd$.

Let $V$ denote an irreducible finite dimensional $gl_d$-module. Then we have the following proposition:

\begin{prop}\label{prop1}
 $V \otimes \bbcq$ is an irreducible $\Der(\bbcq) \ltimes \bbcq$-module with the following actions:
 \begin{enumerate}
 \item $\ad\,t^{s}v(n) = (\sigma(s,n) - \sigma(n,s))v(n+s);$ 
 \item $D(u,r)v(n) = \sigma(r,n)((u,n+\alpha) + r u^{T})v(r+n);$
 \item $t^m v(n) = \sigma(m,n)v(m+n),$
 where $v(n) := v \otimes t^{n} \in V(n) := V \otimes t^{n},$ 
 $ s\notin \rad(f), r\in \rad(f), u, \alpha \in \bbcd, m,n \in \bbzd$.
\end{enumerate}
\end{prop}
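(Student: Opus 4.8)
The plan is to establish two separate facts: that the three displayed formulas genuinely define an action of $\mg = \Der(\bbcq) \ltimes \bbcq$ on $V \otimes \bbcq$, and that the resulting module is irreducible. For the module axioms, observe that the restriction to $\Der(\bbcq)$ given by formulas (1) and (2) is exactly the Liu--Zhao module $M^{\alpha}(V)$, so all bracket relations internal to $\Der(\bbcq)$ are already respected. Hence the only new verifications are the three mixed relations of Proposition \ref{pr2} --- the brackets $[D(u,r), t^n]$, $[\ad\,t^s, t^n]$, and $[t^m, t^n]$ --- tested against the operators defined by (1)--(3).

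Each of these is a direct computation built on the bi-multiplicativity of $\sigma$, which is immediate from property (1): $\sigma(a+b,c) = \sigma(a,c)\sigma(b,c)$ and $\sigma(a, b+c) = \sigma(a,b)\sigma(a,c)$. For the bracket $[t^m, t^n]$ one finds $t^m(t^n v(k)) = \sigma(n,k)\sigma(m, n+k)v(m+n+k)$, and bi-multiplicativity collapses the antisymmetrised expression to $(\smn - \snm)t^{m+n}v(k)$, matching relation (3); in particular the action of $\bbcqa = \bbcq$ is associative. For $[D(u,r), t^n]$ one uses additionally that $r \in \rad(f)$ forces $\sigma(r,n) = \sigma(n,r)$ (because $f(r,n)=1$), which both symmetrises the scalar factors and forces the $gl_d$-valued term $ru^{T}$ to cancel, leaving precisely $(u,n)\sigma(r,n)t^{r+n}v(k)$. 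The relation $[\ad\,t^s, t^n]$ is the analogous, shorter, calculation. These same computations exhibit the associative/anti-associative splitting of $\bbcqa$ and $\bbcqb$ recorded in the introduction.

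For irreducibility I would exploit the weight grading. The elements $D(u,0)$ act on $V(n) = V \otimes t^n$ by the scalar $(u, n+\alpha)$, and distinct $n$ yield distinct weights, so $V \otimes \bbcq = \bigoplus_n V(n)$ is the weight-space decomposition, each weight space $V(n) \cong V$ being finite dimensional. A nonzero submodule $M$ is then graded, $M = \bigoplus_n M_n$ with $M_n \subseteq V(n)$. Here the action of $\bbcq$ is decisive: since $t^m v(n) = \sigma(m,n) v(m+n)$ with $\sigma(m,n) \neq 0$ for all $m, n$, the operator $t^m$ restricts to a linear isomorphism $V(n) \to V(m+n)$; applying $t^m$ and $t^{-m}$ gives $t^m M_n = M_{m+n}$, so that, identifying every $V(n)$ with $V$, all the $M_n$ equal one fixed subspace $U \subseteq V$ and $M = U \otimes \bbcq$.

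It remains to prove $U = V$, and this is the crux. Feeding $v(n) \in M$ into formula (2) and subtracting the scalar contribution $(u, n+\alpha)v \in U$ shows $ru^{T} v \in U$ for every $r \in \rad(f)$, $u \in \bbcd$, so $U$ is stable under all the operators $ru^{T}$. The root-of-unity hypothesis on the $q_{ij}$ is exactly what makes this enough: if $q_{ij}^N = 1$ for all $i,j$ then $f(n,m)^N = 1$ for all $n,m$, while property (3) gives $f(Nn, m) = f(n,m)^N$; hence $f(Nn,m) = 1$ for all $m$, i.e. $N\bbzd \subseteq \rad(f)$, so $\rad(f)$ has finite index in $\bbzd$ and spans $\bbcd$ over $\bbc$. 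Since $u$ ranges over all of $\bbcd$ as well, the rank-one matrices $ru^{T}$ then span all of $gl_d$, so $U$ is a nonzero $gl_d$-submodule of $V$ and irreducibility of $V$ forces $U = V$, giving $M = V \otimes \bbcq$. I expect this final implication --- from ``$U$ is $ru^{T}$-stable'' to ``$U$ is $gl_d$-stable'', where the root-of-unity assumption is indispensable --- to be the heart of the matter, the module-axiom checks being routine. It is worth noting that the $\bbcq$-action is truly needed: $\ad\,t^s$ annihilates $v(n)$ whenever $f(s,n) = 1$, so $\Der(\bbcq)$ by itself need not link all the weight spaces, consistent with $V \otimes \bbcq$ being reducible over $\Der(\bbcq)$ yet irreducible over $\mg$.
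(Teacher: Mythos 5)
Your proposal is correct and takes essentially the same approach as the paper: check the module axioms directly (the paper dismisses this as routine, the $\Der(\bbcq)$ part being already the Liu--Zhao module), then prove irreducibility by decomposing a nonzero submodule $M$ along the weight grading, using the $\bbcq$-action to identify every component $M_n$ with one fixed subspace $U \subseteq V$, and using the $D(u,r)$-action to see that $U$ is a nonzero $gl_d$-submodule of the irreducible module $V$. Your additional observation that the root-of-unity hypothesis gives $N\bbzd \subseteq \rad(f)$, so that the matrices $ru^{T}$ with $r \in \rad(f)$, $u \in \bbcd$ span $gl_d$, merely fills in a detail the paper leaves implicit in its one-line claim that $\bar{V}$ is a $gl_d$-submodule.
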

\begin{proof}
 It is routine check to show that $V \otimes \bbcq$ is $\mg$-module. 
To prove the irreducibility of $V^{\prime} = V \otimes \bbcq$, let $M$ be a nonzero
submodule of $V^{\prime}$. Then since $M$ is a weight module, we have
$M = \oplus_{n\in \bbzd}{M_n \otimes t^{n}}$, where $M_n = \{v \in V: v \otimes t^{n} \in M\}$.
Let for any nonzero vector $v \in M_n$, consider the $\bbcq$ action on it.
As we note that $\bbcq$ is an irreducible $\bbcq$-module with the action $t^{n}.t^{m} = \sigma(n,m)t^{n+m}$,
we have $v \otimes \bbcq \subseteq M$. So it follows that $M_n$ is independent of $n$. So let $M_n = \bar{V} \subseteq V$.
But as $D(u,r)v \otimes t^{n} \in M$, for $v \in \bar{V}$, it follows that $\bar{V}$ is a nonzero $gl_d$-submodule
of $V$. So $\bar{V} = V$ as $V$ is an irreducible $gl_d$-module and hence $M = V^{\prime}$.
\end{proof}
\begin{rem}
 The irreducibility of $\mg$-module $V \otimes \bbcq$ also follows from Proposition 4.1 of \cite{GLKZ}, by 
 considering $V \otimes \bbcq$ as $W \ltimes \bbcq^{(1)}$-module.
\end{rem}

 We will denote the $\mg$-module in Proposition \ref{prop1} by $F^{\alpha}(V)$. It is
 trivial to see that the actions of $\bbcqa$ and $\bbcqb$ on $F^{\alpha}(V)$ associative $(x(yv)= (xy)v)$ and 
 anti-associative $(x(yv)= -(yx)v)$ respectively.
Our main aim in this paper is to prove the converse of Proposition \ref{prop1} which is as follows:
\begin{thm}\label{prop3}
 Let $V^{\prime}$ be an irreducible $\bbzd$-graded $\mg$-module with finite dimensional weight 
 spaces with respect to $\tilde{\h}$, with associative $\bbcqa$ and anti-associative $\bbcqb$ action and
 $t^{0} = 1$. Then $V^{\prime} \cong F^{\alpha}(V)$ for some $\alpha \in \bbc^{d}$ and a finite dimensional irreducible
 $gl_d$-module $V$.
\end{thm}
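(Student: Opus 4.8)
The plan is to reconstruct the three families of actions in Proposition \ref{prop1} one at a time, starting from the associative copy $\bbcqa$. First I would note that for every $n$ the element $t^n$ is invertible in $\bbcq$, since $t^n t^{-n}=\sigma(n,-n)t^0$ and $t^0=1$; as the $\bbcqa$-action is associative, each $t^n$ acts invertibly on $V^{\prime}$, and being homogeneous of degree $n$ it carries each weight space isomorphically onto the next. Hence all weight spaces are isomorphic and finite dimensional; I set $V:=V^{\prime}_0$ and define $v(n):=t^n\cdot v$ for $v\in V$, so that associativity gives $t^m\cdot v(n)=\sigma(m,n)v(m+n)$. This realises $V^{\prime}=V\otimes\bbcq$ as a vector space with the $\bbcqa$-action of part (3). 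Because $V^{\prime}_0$ is a $\tilde{\h}$-weight space, the commuting operators $D(u,0)$ act on it through a single linear character $u\mapsto(u,\alpha)$, which defines $\alpha\in\bbcd$; combining this with $[D(u,0),t^n]=(u,n)t^n$ propagates the formula to $D(u,0)v(n)=(u,n+\alpha)v(n)$, which is part (2) for $r=0$.

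Next I would extract the $gl_d$-action from the outer derivations. For $r\in\rad(f)$ write $D(u,r)v(n)=(\Theta_n(u,r)v)(n+r)$. Applying the relation $[D(u,r),t^n]=(u,n)\sigma(r,n)t^{r+n}$ of Proposition \ref{pr2} on $V^{\prime}_0$, and using $\sigma(n,r)=\sigma(r,n)$ for $r\in\rad(f)$, gives $\Theta_n(u,r)=\sigma(r,n)\big((u,n)\mathrm{Id}+\Theta_0(u,r)\big)$, so the entire $n$-dependence is explicit and it remains to analyse the $n$-independent operators $\Theta_0(u,r)$. Writing $\Theta_0(u,r)=(u,\alpha)\mathrm{Id}+B(u,r)$, the bracket $[D(u,r),D(u^{\prime},r^{\prime})]=D(w,r+r^{\prime})$ translates into the defining relations of $gl_d$ for the operators $B(u,r)$; since $q$ has entries that are roots of unity, $\rad(f)$ is a finite index, hence full rank, subgroup of $\bbzd$, so the elements $r u^{T}$ span $gl_d$ and the assignment $r u^{T}\mapsto B(u,r)$ makes $V$ a finite dimensional $gl_d$-module for which $D(u,r)$ acts exactly as in part (2).

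Finally I would treat the inner derivations $\ad\,t^s$ with $s\notin\rad(f)$. Writing $\ad\,t^s\,v(n)=(C_n(s)v)(n+s)$, the relation $[\ad\,t^s,t^n]=(\sigma(s,n)-\sigma(n,s))t^{s+n}$ yields $C_n(s)=\sigma(n,s)C_0(s)+(\sigma(s,n)-\sigma(n,s))\mathrm{Id}$, so everything reduces to the single operator $C_0(s)$ describing the action on $V^{\prime}_0$, and the goal is to prove $C_0(s)=0$. To compute it I would decompose $\ad\,t^s=(\ad\,t^s-t^s)+t^s$ with $\ad\,t^s-t^s\in\bbcqb$ and feed in the anti-associativity of $\bbcqb$, which makes $s\mapsto(\ad\,t^s-t^s)$ act as a sign-twisted anti-homomorphism and, on $Z(\bbcq)=\bbcqa\cap\bbcqb$ (where $\ad\,t^s=0$ and $t^0=1$), agree with the associative $\bbcqa$-action.

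I expect this last stage to be the main obstacle. The Lie relations and anti-associativity by themselves determine $C_0(s)$ only up to a character of the finite group $\bbzd/\rad(f)$: the bracket $[D(u,r),\ad\,t^s]=(u,s)\sigma(r,s)\,\ad\,t^{r+s}$ forces this ambiguity to commute with all of $gl_d$, hence to be scalar by Schur's lemma on the irreducible $gl_d$-module $V$, and it is automatically trivial on $\rad(f)$. The delicate remaining point is to combine the precise anti-associativity normalisation with $t^0=1$ and the irreducibility of $V^{\prime}$ to eliminate this residual character, obtaining $C_0(s)=0$ and hence part (1). Once all three actions coincide with those of Proposition \ref{prop1}, the argument of that proposition read backwards shows that $V=V^{\prime}_0$ is an irreducible $gl_d$-module, and therefore $V^{\prime}\cong F^{\alpha}(V)$.
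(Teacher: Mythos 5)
Your first stage (recovering the $\bbcqa$-action and the identification of all weight spaces with $V^{\prime}_0$) is sound and matches the paper's Section \ref{sec2}. But your second stage buries the paper's main technical point. You assert that the bracket $[D(u,r),D(u^{\prime},r^{\prime})]=D(w,r+r^{\prime})$ ``translates into the defining relations of $gl_d$'' for the operators $B(u,r)$, so that $ru^{T}\mapsto B(u,r)$ is well defined. Well-definedness requires additivity in $r$, i.e. $B(u,r+r^{\prime})=B(u,r)+B(u,r^{\prime})$, equivalently the vanishing of all second differences $B(u,r)-B(u,r+n_1)-B(u,r+n_2)+B(u,r+n_1+n_2)$. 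This is not a formal consequence of the Lie brackets: the operators $\tp(u,r)=t^{-r}D(u,r)-\sigma(-r,r)D(u,0)$ span a Lie algebra $\tp$ whose relations (Proposition \ref{prop0}(1)) close on themselves without forcing the second differences to vanish; killing them is exactly passing to the quotient $\tp/I_2^{\prime}\cong gl_d$. The paper gets this by showing each $V^{\prime}_r$ is a finite-dimensional irreducible module over the algebra generated by the $\tp(u,r)$ and $t^{-s}\ad\,t^{s}$ (the $U_0$-generation argument in Proposition \ref{prop0}(4)), that the $t^{-s}\ad\,t^{s}$ form an abelian ideal (Proposition \ref{pr4}) and hence act as scalars by the reductivity Lemma \ref{thm3}, and then invoking the argument of \cite{RE} that $I_2^{\prime}$ annihilates such a module. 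Some use of finite-dimensionality of the weight spaces is indispensable here, and your proposal makes none.

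The fatal gap is in your last stage: the goal $C_0(s)=0$ is not ``delicate'', it is false. Take $F^{\beta}(V)$ and shift its grading by some $n_0\notin\rad(f)$, declaring $V^{\prime}_n:=V\otimes t^{n+n_0}$; this module satisfies every hypothesis of the theorem (irreducibility, finite-dimensional weight spaces, associative $\bbcqa$, anti-associative $\bbcqb$, $t^{0}=1$), yet on the new zero weight space one computes $C_0(s)=(1-f(n_0,s))\,\mathrm{Id}$, which is nonzero for any $s$ with $f(n_0,s)\neq 1$; in the paper's notation $g(s)=f(n_0,s)$, a genuinely non-trivial character of $\bbzd/\rad(f)$. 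So no combination of anti-associativity, $t^{0}=1$ and irreducibility can force your residual character to be trivial in the coordinates you fixed. What the paper does instead is to keep the character and absorb it: it proves $t^{-s}\ad\,t^{s}$ acts by scalars, derives $\ad\,t^{s}v(n)=(\sigma(s,n)-g(s)\sigma(n,s))v(n+s)$, proves $g$ is multiplicative and trivial on $\rad(f)$ (this step rests on Proposition \ref{thm1}, whose proof is where associativity and anti-associativity actually enter), and then composes the rescaling isomorphism $v(n)\mapsto g(n)^{-1}v(n)$ of Proposition \ref{prop8} with Liu--Zhao's Theorem \ref{thm5} to get $V^{\prime}\cong G_{g}^{\alpha}(V)\cong F_{g^{-1}}^{\alpha}(V)\cong F_{l}^{\beta}(V)$ for a generally \emph{different} parameter $\beta$. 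The ``for some $\alpha$'' in the statement is essential: your plan implicitly tries to prove the theorem with the $\alpha$ read off from $V^{\prime}_0$, which the shifted example shows is impossible.
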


\section{The action of $D(u,r)$ and $\bbcq$ on $V^{\prime}$}\label{sec2}
Let $U(\mg)$ denote the universal enveloping algebra of $\mg$. Let $L(\mg)$
be a two sided ideal of $U(\mg)$ generated by $\{ t^{m}t^{n} - \sigma(m,n)t^{m+n} \,\, \forall\, m,n \in \bbzd
, (\ad\,(t^{n}) - t^{n})(\ad\,(t^{m}) - t^{m}) + \sigma(m,n)(\ad\,t^{m+n} - t^{m+n})\,\,\forall\,\, n,m \in \bbz^{d},t^{0}-1 \}$.
 Throughout this section $V^{\prime}$ will be as in
Theorem \ref{prop3}. Therefore
$L(\mg)$ acts trivially on $V^{\prime}$ and 
is a $U(\mg)/L(\mg)$-module. Let $V^{\prime} = \oplus_{r\in \bbzd}{V^{\prime}_r}$ be its weight space decomposition with
$V^{\prime}_r = \{v\in V^{\prime}: D(u,0)v = (u,r+\alpha)v,\,\, \forall \, u\in \bbcd \} $ for some fixed $\alpha \in \bbc^{d}$.

\begin{prop}\label{thm1}
For $r, s \in \bbc^{d}$, we have
 $$[t^{-s}\ad\,t^{s},t^{-r}\ad\,{t^{r}}]=0 \,\,\mathrm{on} \,\,V^{\prime}.$$
\end{prop}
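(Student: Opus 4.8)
The plan is to split the inner derivation $\ad\,t^s$, viewed as an element of $\mg = W\ltimes(\bbcqa+\bbcqb)$, into its two pieces lying in the commuting copies $\bbcqa$ and $\bbcqb$. Concretely, I would write $\ad\,t^s = (\ad\,t^s - t^s) + t^s$, where $\ad\,t^s - t^s \in \bbcqb$ and $t^s \in \bbcqa$. Acting on $V^{\prime}$, the operator $t^{-s}\ad\,t^s$ then equals $t^{-s}(\ad\,t^s - t^s) + t^{-s}t^s$. Since $\bbcqa$ acts associatively and $t^0 = 1$, the second summand acts as the scalar $\sigma(-s,s)$, which commutes with every operator. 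Hence it suffices to prove $[\,t^{-s}(\ad\,t^s - t^s),\, t^{-r}(\ad\,t^r - t^r)\,] = 0$ on $V^{\prime}$, where in each factor the first tensor-slot acts through $\bbcqa$ and the second through $\bbcqb$. (Note that when $s\in\rad(f)$ we have $\ad\,t^s=0$, and the formula degenerates correctly to $0$, so the argument is uniform in $s,r\in\bbzd$.)

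Next I would use $[\bbcqa,\bbcqb]=0$, which one checks directly from the brackets in Proposition \ref{pr2}, to move the $\bbcqa$-factors $t^{-s}$ and $t^{-r}$ past the $\bbcqb$-factors. In this way each of the two products factors as the $\bbcqa$-action of $t^{-s}t^{-r}$ composed with the $\bbcqb$-action of $(\ad\,t^s - t^s)(\ad\,t^r - t^r)$, with the roles of $s$ and $r$ interchanged in the second product. I would then evaluate each factor through its own module structure: associativity of $\bbcqa$ gives $t^{-s}t^{-r} = \sigma(-s,-r)t^{-s-r}$, while anti-associativity of $\bbcqb$ gives $(\ad\,t^s - t^s)(\ad\,t^r - t^r) = -\sigma(r,s)(\ad\,t^{s+r} - t^{s+r})$ as operators on $V^{\prime}$, and symmetrically with $s\leftrightarrow r$.

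Collecting the coefficients, the first product equals $-\sigma(-s,-r)\sigma(r,s)$ times the operator $t^{-s-r}(\ad\,t^{s+r} - t^{s+r})$, and the second equals $-\sigma(-r,-s)\sigma(s,r)$ times the very same operator. The commutator thus vanishes precisely when $\sigma(-s,-r)\sigma(r,s) = \sigma(-r,-s)\sigma(s,r)$. This is where the explicit shape of $\sigma$ enters: from $\sigma(n,m) = \prod_{i<j}q_{ji}^{n_j m_i}$ one reads off $\sigma(-s,-r) = \sigma(s,r)$ and $\sigma(-r,-s) = \sigma(r,s)$, so both coefficients equal $\sigma(s,r)\sigma(r,s)$ and the two terms cancel.

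The computation is short once the decomposition is in place; the only genuine inputs are the commutativity $[\bbcqa,\bbcqb]=0$ and the symmetry $\sigma(-s,-r)=\sigma(s,r)$, which together force the cancellation. I expect the main point to guard against to be purely bookkeeping: keeping track of which $\sigma$-factor and which index ($s+r$ versus $-s-r$) each of the two module structures contributes, and being careful that the $\bbcqa$-scalar $\sigma(-s,s)$ is genuinely central so that it may be discarded at the outset.
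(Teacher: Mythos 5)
Your proof is correct, but it takes a genuinely different route from the paper's. The paper first derives the single mixed relation $\ad\,t^{r}\ad\,t^{s} - (t^{r}\ad\,t^{s}+t^{s}\ad\,t^{r}) + \sigma(s,r)\,\ad\,t^{r+s} = 0$ on $V^{\prime}$ from the two module relations, and then expands $[t^{-s}\ad\,t^{s},t^{-r}\ad\,t^{r}]$ by the Leibniz rule using the brackets of Proposition \ref{pr2}, substituting that relation to eliminate the quadratic term $\ad\,t^{s}\ad\,t^{r}$; the vanishing then emerges from a long term-by-term cancellation. You instead split $\ad\,t^{s}$ into its $\bbcqb$-component $\ad\,t^{s}-t^{s}$ and its $\bbcqa$-component $t^{s}$, so that $t^{-s}\ad\,t^{s}$ becomes a product of two commuting operators plus the central scalar $\sigma(-s,s)$, and the commutator collapses to a comparison of coefficients: both products equal $-\sigma(s,r)\sigma(r,s)$ times the same operator $t^{-s-r}(\ad\,t^{s+r}-t^{s+r})$, and this coefficient is symmetric in $s$ and $r$. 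The inputs are identical (associativity of $\bbcqa$, anti-associativity of $\bbcqb$, $t^{0}=1$, and $[\bbcqa,\bbcqb]=0$), but your factorization makes the mechanism transparent and avoids the bookkeeping. One point deserves emphasis because it is the crux: you correctly used the paper's convention of anti-associativity, $x(yv)=-(yx)v$, equivalently the relation $(\ad\,t^{s}-t^{s})(\ad\,t^{r}-t^{r}) = -\sigma(r,s)(\ad\,t^{r+s}-t^{r+s})$ in $U(\mg)/L(\mg)$ with the \emph{reversed} arguments of $\sigma$. Had you read anti-associativity as $x(yv)=-(xy)v$, your two coefficients would have been $\sigma(s,r)^{2}$ and $\sigma(r,s)^{2}$, which differ whenever $f(s,r)^{2}\neq 1$, and the argument would fail; the reversed product in the definition is exactly what makes your cancellation (and the proposition itself) work.
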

\begin{proof}
First using associativity and anti-associativity of respective rational quantum tories  we get
\begin{align}\label{neq}
 \ad\,t^{r}\ad\,t^{s} - (t^{r}\ad\,t^{s}+ t^{s}\ad\,t^{r}) + 
 \sigma(s,r)\,\ad\,t^{r+s} = 0 \,\, \mathrm{on} \,\, V^{\prime} .
 \end{align}
  Let us consider
 \begin{align*}
 [t^{-s}\ad\,t^{s}&,t^{-r}\ad\,t^{r}] \\
 &= [t^{-s}\ad\,t^{s},t^{-r}]\ad\,t^{r} + t^{-r}[t^{-s}\ad\,t^{s},\ad\,t^{r}]\\
 &= [t^{-s},t^{-r}]\ad\,t^{s}\ad\,t^{r} + t^{-s}[\ad\,t^{s},t^{-r}]\ad\,t^{r} \\
 & + t^{-r}[t^{-s},\ad\,t^{r}]\ad\,t^{s} + t^{-r}t^{-s}[\ad\,t^{s},\ad\,t^{r}]\\
 &= (\sigma(s,r)-\sigma(r,s))t^{-(s+r)}(t^{s}\ad\,t^{r} + t^{r}\ad\,t^{s} - \sigma(r,s)\ad\,t^{r+s})\\
 &+ t^{-s}(\sigma(s,-r) - \sigma(-r,s))t^{s-r}\ad\,t^{r}  \\
 &- t^{-r}(\sigma(r,-s) - \sigma(-s,r))t^{r-s}\ad\,t^{s}\\
 &+ \sigma(r,s)t^{-(s+r)}(\sigma(s,r)-\sigma(r,s))\ad\,t^{r+s}\\
 &= (\sigma(s,r)-\sigma(r,s))(\sigma(-(r+s),s)t^{-r}\ad\,t^{r} +\sigma(-(r+s),r)t^{-s}\ad\,t^{s}\\
 &- \sigma(r,s)t^{-(r+s)}\ad\,t^{r+s}) + (\sigma(s,-r) - \sigma(-r,s))\sigma(-s,s-r)t^{-r}\ad\,t^{r}\\
 &- (\sigma(r,-s) - \sigma(-s,r))\sigma(-r,r-s)t^{-s}\ad\,t^{s} \\
 & +\sigma(r,s)(\sigma(s,r)-\sigma(r,s))t^{-(s+r)}\ad\,t^{r+s}\\
 &=0 .
 \end{align*}
 
\end{proof}
We state the following lemma from \cite{HM} which will be used later.
\begin{lem}[\cite{HM}\label{thm3}, Prop.19.1(b)]
 Let $\mg^{\prime}$ be a Lie algebra which need not be finite dimensional.
 Let $(V_{1},\rho)$ be an irreducible finite dimensional module for $\mg^{\prime}$.
 We have a map $\rho: \mg^{\prime} \rightarrow \mbox{End}\,(V_1)$. Then $\rho(\mg^{\prime})$
 is a reductive Lie algebra with at most one dimensional center.
\end{lem}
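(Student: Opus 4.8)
The plan is to work entirely with the image $L := \rho(\mg^{\prime})$, which is a Lie subalgebra of $\mathrm{End}(V_1) = \mathfrak{gl}(V_1)$ and hence finite dimensional, so the hypothesis that $\mg^{\prime}$ may be infinite dimensional causes no difficulty. Since the $\mg^{\prime}$-action on $V_1$ factors through $\rho$, the space $V_1$ is a finite dimensional irreducible $L$-module over $\bbc$. I must establish two things: that the solvable radical $R$ of $L$ coincides with the center $Z(L)$ (this is exactly reductivity), and that $\dim Z(L) \le 1$.

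First I would bound the center using Schur's lemma. Any $z \in Z(L)$ commutes with all of $L$, so $z$, viewed as an endomorphism of $V_1$, lies in $\mathrm{End}_L(V_1)$. Because $V_1$ is a finite dimensional irreducible module over the algebraically closed field $\bbc$, Schur's lemma gives $\mathrm{End}_L(V_1) = \bbc\,\mathrm{id}_{V_1}$. Hence $Z(L) \subseteq \bbc\,\mathrm{id}_{V_1}$, and in particular $\dim Z(L) \le 1$.

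Next I would show that the solvable radical $R$ of $L$ already acts by scalars. Applying Lie's theorem to the solvable algebra $R$ acting on the nonzero finite dimensional space $V_1$ produces a weight $\la \in R^{\ast}$ whose weight space $V_\la = \{ v \in V_1 : x v = \la(x) v \text{ for all } x \in R \}$ is nonzero. Since $R$ is an ideal of $L$, the invariance lemma underlying the proof of Lie's theorem shows that $V_\la$ is stable under all of $L$; irreducibility of $V_1$ then forces $V_\la = V_1$. Thus every $x \in R$ acts on $V_1$ as the scalar $\la(x)\,\mathrm{id}_{V_1}$, i.e. $R \subseteq \bbc\,\mathrm{id}_{V_1}$.

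Finally I would combine the two inclusions. A scalar operator commutes with everything in $\mathrm{End}(V_1)$, so $\bbc\,\mathrm{id}_{V_1} \cap L \subseteq Z(L)$; together with $R \subseteq \bbc\,\mathrm{id}_{V_1}$ this yields $R \subseteq Z(L)$, while $Z(L) \subseteq R$ holds automatically since the center is an abelian, hence solvable, ideal. Therefore $R = Z(L)$, so $L = \rho(\mg^{\prime})$ is reductive with $\dim Z(L) \le 1$, as claimed. The only genuinely nontrivial input is the invariance lemma (equivalently, the heart of Lie's theorem), and this is precisely where finite dimensionality of $V_1$ and the assumption that the ground field is algebraically closed of characteristic zero are used; the remainder of the argument is formal.
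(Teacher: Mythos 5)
Your proof is correct, but note that the paper itself contains no proof of this statement to compare against: the lemma is simply imported from the reference [HM, Prop.\ 19.1(b)] and used as a black box. What you have supplied is the standard argument behind that cited fact, and it is sound. Passing to the image $L=\rho(\mg^{\prime})\subseteq \mathrm{End}(V_1)$ is exactly the right move to neutralize the possible infinite dimensionality of $\mg^{\prime}$; Schur's lemma (valid because $V_1$ is finite dimensional and irreducible over the algebraically closed field $\bbc$) gives $Z(L)\subseteq \bbc\,\mathrm{id}_{V_1}$ and hence the dimension bound on the center; and Lie's theorem plus the invariance lemma correctly upgrades the existence of one weight vector for the radical $R$ to the statement that the weight space $V_{\la}$ is $L$-stable, hence all of $V_1$, so $R$ acts by scalars and $R\subseteq \bbc\,\mathrm{id}_{V_1}\cap L\subseteq Z(L)$; the reverse inclusion $Z(L)\subseteq R$ is automatic. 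The only point worth tightening is the last step: you conclude reductivity from the equality $\rad(L)=Z(L)$, which is fine if that is your definition of reductive, but if one instead defines reductive as a direct sum of a semisimple ideal and the center, you should add one sentence invoking a Levi decomposition $L=S\ltimes R$, which becomes a direct sum $L=S\oplus Z(L)$ because $R=Z(L)$ is central. You also correctly identify where each hypothesis (finite dimensionality, algebraic closedness, characteristic zero) enters, which is precisely the content the paper delegates to [HM].
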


Let $U_1 = U(\mg)/L(\mg)$ and let us define $\tp(u,r) = t^{-r}D(u,r) - \sigma(-r,r)D(u,0)$ as 
an element of $U_1$ for $r\in \rad(f), u\in \bbcd$. Let $\tp$ be the subspace generated by $\tp(u,r)$ for all
$u$ and $r$. Let $\mg^{\prime}$ be the Lie subalgebra generated by $\tp(u,r)$ and $t^{-s}\ad\,t^{s}$ for 
all $u \in \bbcd, \,\, r \in \rad(f)$ and $s\notin \rad(f)$. Let $I$ be a subalgebra of $\mg^{\prime}$ generated 
by the elements of the form $t^{-s}\ad\,t^{s}$. Then we have the following proposition:
\begin{prop}\label{pr4}
 $I$ is an abelian ideal of $\mg^{\prime}$.
\end{prop}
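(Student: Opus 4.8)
The plan is to prove the two assertions, \emph{abelian} and \emph{ideal}, separately, reducing each to a statement about the generators of $I$ and of $\mg^{\prime}$. Throughout I regard $\tp(u,r)$ and $t^{-s}\ad\,t^{s}$ as operators on $V^{\prime}$ (equivalently, as classes in $U_1$), which is legitimate since all of them have degree $0$ and hence preserve the finite-dimensional weight spaces.

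First I would settle abelianness directly from Proposition \ref{thm1}. Since $I$ is generated as a Lie algebra by the elements $t^{-s}\ad\,t^{s}$ with $s\notin\rad(f)$, and Proposition \ref{thm1} gives $[t^{-s}\ad\,t^{s},\,t^{-r}\ad\,t^{r}]=0$ on $V^{\prime}$ for all such $s,r$, these generators commute pairwise. Therefore the Lie algebra they generate is just their linear span $I=\mathrm{span}\{t^{-s}\ad\,t^{s}:s\notin\rad(f)\}$, which is abelian.

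For the ideal property I would introduce the normalizer $N=\{x\in\mg^{\prime}:[x,I]\subseteq I\}$. By the Jacobi identity $N$ is a Lie subalgebra of $\mg^{\prime}$, so it suffices to show that the two families of generators of $\mg^{\prime}$ lie in $N$. The generators $t^{-s}\ad\,t^{s}$ lie in $N$ because $I$ is abelian, so the real content is the single bracket $[\tp(u,r),\,t^{-s}\ad\,t^{s}]$ for $r\in\rad(f)$, $s\notin\rad(f)$. I would expand $\tp(u,r)=t^{-r}D(u,r)-\sigma(-r,r)D(u,0)$ and apply the Leibniz rule $[ab,c]=a[b,c]+[a,c]b$ together with the brackets of Proposition \ref{pr2} and of $\Der(\bbcq)$ and the associativity relation $t^{a}t^{b}=\sigma(a,b)t^{a+b}$ valid in $U_1$. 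The key simplifications all stem from $r\in\rad(f)$, which yields $\sigma(r,\cdot)=\sigma(\cdot,r)$, $\ad\,t^{r}=0$ and $[t^{r},\cdot]=0$: one checks that the $D(u,0)$-term contributes $0$, that $[t^{-r},\,t^{-s}\ad\,t^{s}]=0$ so that the bracket collapses to $t^{-r}[D(u,r),\,t^{-s}\ad\,t^{s}]$, and finally, after simplifying the $\sigma$-factors by bimultiplicativity, that
$$[\tp(u,r),\,t^{-s}\ad\,t^{s}] = -(u,s)\,\sigma(-r,r)\, t^{-s}\ad\,t^{s} \;+\; (u,s)\,\sigma(r,s)^{2}\, t^{-(r+s)}\ad\,t^{r+s}.$$
Because $r\in\rad(f)$ and $s\notin\rad(f)$ force $r+s\notin\rad(f)$, both summands are scalar multiples of generators of $I$, so the bracket lies in $I$ and $\tp(u,r)\in N$. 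Hence $N=\mg^{\prime}$ and $I$ is an ideal.

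The step I expect to be the main obstacle is exactly the coefficient bookkeeping in that last computation: one must verify that all the spurious $\sigma$-factors produced by the repeated use of $t^{a}t^{b}=\sigma(a,b)t^{a+b}$ collapse—via $\sigma(r,-s)\sigma(-r,-s)=\sigma(0,-s)=1$ and $\sigma(-r,-s)=\sigma(r,s)$, using $r\in\rad(f)$—so that the answer is an honest combination of the two $I$-generators $t^{-s}\ad\,t^{s}$ and $t^{-(r+s)}\ad\,t^{r+s}$ with no leftover terms. The structural reductions (the normalizer argument and the vanishing of the $D(u,0)$- and $t^{-r}$-contributions) are routine once this cancellation is confirmed.
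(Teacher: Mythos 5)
Your proof is correct and takes essentially the same route as the paper: abelianness of $I$ from the pairwise commuting of the generators $t^{-s}\ad\,t^{s}$ (Proposition \ref{thm1}), and the ideal property by a Leibniz-rule computation of $[\tp(u,r),t^{-s}\ad\,t^{s}]$, whose outcome agrees with the paper's (your coefficient $\sigma(r,s)^{2}$ equals the paper's $\sigma(s,r)\sigma(r,s)$ because $r\in\rad(f)$ gives $\sigma(r,s)=\sigma(s,r)$). The only cosmetic differences are your explicit normalizer wrapper and that you cancel the $D(u,0)$- and $[t^{-r},\cdot]$-contributions first, whereas the paper instead factors out $t^{-s}$ on the left using $[\tp(u,r),t^{-s}]=0$.
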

\begin{proof}
 It follows from lemma \ref{thm3} that $I$ is an abelian subalgebra. To prove that
 $I$ is an ideal of $\mg^{\prime}$ we need to prove that $[\tp(u,r),t^{-s}\ad\,t^{s}] \in I$.
 So consider
 \begin{align*}
 [\tp(u,r)&,t^{-s}\ad\,t^{s}] \\
 &= [\tp(u,r),t^{-s}]\ad\,t^{s} + t^{-s}[\tp(u,r), \ad\,t^{s}] \\
 &= t^{-s}[\tp(u,r), \ad\,t^{s}] \,\,(\mbox{as}\,\, [\tp(u,r),t^{-s}] =0 , \,\,\mbox{see} 
 \,\,\mbox{prop}.\,\, \ref{prop0}(5))\\
 &= t^{-s}[t^{-r}D(u,r),\ad\,t^{s}] - t^{-s}\sigma(-r,r)[D(u,0),\ad\,t^{s}]\\
 &= t^{-s}[t^{-r},\ad\,t^{s}]D(u,r) +t^{-s}t^{-r} [D(u,r),\ad\,t^{s}] \\
 &- t^{-s}\sigma(-r,r)(u,s)\ad\,t^{s}
 \end{align*}
 as $r \in \rad(f)$, we have $[t^{-r},\ad\,t^{s}] = 0$, 
 so we get
 \begin{align*}
[\tp(u,r),t^{-s}\ad\,t^{s}] &= \sigma(s,r)(u,s)\sigma(r,s)t^{-(s+r)}\ad\,t^{r+s} \\
&- \sigma(-r,r)(u,s)t^{-s}\ad\,t^{s} \in I.
\end{align*} 
 This completes the proof.
\end{proof}

\begin{prop}\label{prop0}
 \begin{enumerate}
  \item $[\tp(u,r),\tp(v,s)] = (v,r)\sigma(-s,s)\tp(u,r)\\
  -(u,s)\sigma(-r,r)\tp(v,s) + \sigma(s,r)\tp(w,r+s)$,\\
  where $w = \sigma(r,s)[(u,s)v-(v,r)u]$ and therefore $\tp$ is a Lie-subalgebra.
  \item $[D(v,0),\tp(u,r)] = 0$.
  \item Let $V^{\prime} = \oplus_{r\in \bbzd}{V^{\prime}}_r$ be its weight space decomposition. Then
  each $V^{\prime}_r$ is $\tp$ invariant.
  \item Each $V^{\prime}_r$ is $\tp$-irreducible.
  \item ${V^{\prime}}_r \cong {V^{\prime}}_s$ as $\tp$-module.
  \end{enumerate}
\end{prop}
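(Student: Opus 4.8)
My plan is to treat parts (1) and (2) as explicit computations inside $U_1=U(\mg)/L(\mg)$, using the Leibniz rule $[XY,Z]=X[Y,Z]+[X,Z]Y$ together with the brackets of Proposition \ref{pr2} and the bi-multiplicativity of $\sigma$. For (2) I would expand $[D(v,0),\tp(u,r)]$: the Cartan bracket $[D(v,0),D(u,0)]$ vanishes, while in $[D(v,0),t^{-r}D(u,r)]$ the two surviving contributions $[D(v,0),t^{-r}]D(u,r)=-(v,r)t^{-r}D(u,r)$ and $t^{-r}[D(v,0),D(u,r)]=(v,r)t^{-r}D(u,r)$ cancel (here $(u,0)=0$ and $\sigma(0,r)=1$). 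For (1) I would expand $[\tp(u,r),\tp(v,s)]$ fully; the cross terms carrying a single $D(\cdot,0)$ vanish by exactly the computation of (2), so only $[t^{-r}D(u,r),t^{-s}D(v,s)]$ remains. Expanding this with the Leibniz rule and collapsing with $t^mt^n=\sigma(m,n)t^{m+n}$ gives three terms, $(v,r)\sigma(-s,s)t^{-r}D(u,r)-(u,s)\sigma(-r,r)t^{-s}D(v,s)+\sigma(s,r)t^{-(r+s)}D(w,r+s)$ with $w=\sigma(r,s)[(u,s)v-(v,r)u]$. Rewriting each $t^{-a}D(b,a)=\tp(b,a)+\sigma(-a,a)D(b,0)$ converts this to the stated formula; the only real check is that the residual $D(\cdot,0)$ terms cancel, which reduces to the identity $\sigma(-r,-s)\,\sigma(-(r+s),r+s)\,\sigma(r,s)=\sigma(-r,r)\,\sigma(-s,s)$. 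This, and the replacement of $\sigma(-r,-s)$ by $\sigma(s,r)$, both follow from bi-multiplicativity and from $r,s\in\rad(f)\Rightarrow\sigma(r,s)=\sigma(s,r)$. I expect no conceptual difficulty here, only bookkeeping.

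Part (3) is then immediate: since each $\tp(u,r)$ commutes with every $D(v,0)$ by (2), it preserves the joint eigenspaces $V^{\prime}_r$. For (5) I would exhibit the explicit intertwiner $\phi=t^{s-r}\colon V^{\prime}_r\to V^{\prime}_s$ (it has degree $s-r$, so it lands in $V^{\prime}_s$). Invertibility is clear because $t^{r-s}t^{s-r}=\sigma(r-s,s-r)t^{0}=\sigma(r-s,s-r)\,\mathrm{id}$ is a nonzero scalar (using $t^{0}=1$), so between finite-dimensional spaces $\phi$ is a bijection. Equivariance amounts to $[t^{s-r},\tp(u,a)]=0$ as an operator, which I would verify directly: since $a\in\rad(f)$ one has $[t^{s-r},t^{-a}]=0$ and $\sigma(a,s-r)\sigma(-a,s-r)=1$, and the term coming from $[t^{s-r},t^{-a}D(u,a)]$ exactly cancels $-\sigma(-a,a)[t^{s-r},D(u,0)]$. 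Hence $\phi$ is a $\tp$-module isomorphism.

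Part (4) is the substantive one, and I would split it into three steps. First, a standard graded-irreducibility argument: for any $0\neq v\in V^{\prime}_r$ one has $U(\mg)v=V^{\prime}$, and extracting the weight-$r$ component gives $U(\mg)_0 v=V^{\prime}_r$, where $U(\mg)_0$ is the degree-zero part of (the image of) $U(\mg)$; thus $V^{\prime}_r$ is $U(\mg)_0$-irreducible. Second---the part I expect to be the main obstacle---I would show that on $V^{\prime}$ the image of $U(\mg)_0$ is generated by $\mg^{\prime}$ together with $\tilde{\h}$ and scalars. Concretely, any degree-zero monomial in the generators $D(u,r),\ad t^s,t^n$ can be normal-ordered and collapsed, modulo $L(\mg)$, until every factor is one of $t^{-s}\ad t^s\in I$, $t^{-r}D(u,r)\in\tp+\tilde{\h}$, or a scalar $t^{-m}t^m=\sigma(-m,m)$: products of two inner derivations are reduced by relation \eqref{neq}, adjacent $t$'s combine by $t^mt^n=\sigma(m,n)t^{m+n}$, and each degree-raising factor is thereby paired against a degree-lowering monomial. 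Since $\tilde{\h}$ acts by scalars on $V^{\prime}_r$, this yields that $V^{\prime}_r$ is irreducible as an $\mg^{\prime}$-module.

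Finally I would upgrade $\mg^{\prime}$-irreducibility to $\tp$-irreducibility. As $V^{\prime}_r$ is a finite-dimensional irreducible $\mg^{\prime}$-module, Lemma \ref{thm3} gives that the image $\rho(\mg^{\prime})\subseteq\mathrm{End}(V^{\prime}_r)$ is reductive with at most one-dimensional center. The image $\rho(I)$ of the abelian ideal $I$ (Proposition \ref{pr4}) is an abelian ideal of $\rho(\mg^{\prime})$, hence central, hence at most one-dimensional, hence acts by scalars on $V^{\prime}_r$ by Schur's lemma. Since $\mg^{\prime}=\tp+I$, we get $\rho(\mg^{\prime})=\rho(\tp)+\bbc\,\mathrm{id}$, so every $\tp$-invariant subspace of $V^{\prime}_r$ is automatically $\mg^{\prime}$-invariant and therefore $0$ or $V^{\prime}_r$. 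This proves (4). The single delicate point is the collapse in the second step: showing that every degree-zero product reduces, modulo $L(\mg)$, onto $\mg^{\prime}$ plus Cartan plus scalars; the remaining steps are either formal $\sigma$-bookkeeping or a direct appeal to Lemma \ref{thm3} and Proposition \ref{pr4}.
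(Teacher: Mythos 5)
Your proposal is correct and follows essentially the same route as the paper: parts (1)--(3) by direct bracket computation, part (5) via the intertwiner $t^{s-r}$ with $[t^{s-r},\tp(u,a)]=0$, and part (4) by reducing $U_0$ (the degree-zero part of $U(\mg)/L(\mg)$) to $\tp$, $I$, $\tilde{\h}$ and scalars, then applying Lemma \ref{thm3} with Proposition \ref{pr4} to make $I$ act by scalars. The only difference is presentational: the paper outsources the computations in (1)--(3) and the normal-ordering/generation step in (4) to Proposition 3.2 of \cite{RE}, whereas you carry them out explicitly.
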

\begin{proof}
 Proof of $(1),(2)$ and $(3)$ are same as in \cite{RE} (Proposition 3.2).
For proving $(4)$, let $U(\mg) = \oplus_{r\in \bbzd}{U_{r}}$, where $U_r = \{v\in U(\mg):[D(u,0),v] = (u,r)v,
\, \forall \, u\in \bbcd\ \,\}$. As $V^{\prime}$ is an irreducible $\mg$-module, again by using the same argument
as Proposition 3.2 of \cite{RE} we get that $V^{\prime}_r$ is
an irreducible $U_0$-module and every element of $U_0$ can be written as a linear combination of the elements
$t^{-r_1}D(u,r_1)\cdots t^{-r_k}D(u,r_k)t^{-s_1}\ad\, t^{s_1}\cdots t^{-s_n}\ad\,t^{s_n}$.
So  $U_0$ is generated by the elements of the form $\tp(u,r)$ and $t^{-s}\ad\,t^{s}$, where $r\in \rad(f)$
and $s \notin \rad(f)$. Now we use lemma \ref{thm3}, where we take $\mg^{\prime}$ as the Lie algebra generated by
$\tp(u,r)$ and $t^{-s}\ad\,t^{s}$ and $V = V^{\prime}_r$. As the elements of the form $t^{-s}\ad\,t^{s}$ forms an 
abelian ideal (Prop.\ref{pr4}) , it follows from the Lemma \ref{thm3} that the elements of the form $t^{-s}\ad\,t^{s}$ 
must lie in
the center of $\rho(\mg^{\prime})$ which is at most one dimensional. Consequently it follows that $t^{-s}\ad\,t^{s}$
acts as a scalar on $V_r^{\prime}$ and hence $V_r^{\prime}$ is an irreducible $T^{\prime}$-module.\\
Now let us prove $(5)$.   
As $t^{s-r}V^{\prime}_{r} \subseteq V^{\prime}_{s}$. But as 
$$V^{\prime}_{r} = t^{(r-s)}t^{(s-r)}V^{\prime}_{r} \subseteq t^{(r-s)}V^{\prime}_{s} \subseteq V^{\prime}_{r}.$$
We get $V^{\prime}_{r} = t^{r-s}V^{\prime}_{s}$. Define $\psi : V^{\prime}_{r} \rightarrow V^{\prime}_{s}$ by
$\psi(v) = t^{(r-s)}v $. Note that $\psi$ is injective (as it is graded) and surjective, we need to prove that it
is a $\tp$-module homomorphism, i.e., we need to show that $[t^{(s-r)},\tp(u,m)] = 0$ which follows
from a straight forward calculation.
\end{proof}

 Now by Liu and Zhao \cite{KZ}, $\rad(f) = m_1\bbz e_1 \oplus \cdots \oplus m_d \bbz e_d$ for 
  some $0 \neq m_i \in \bbz$ for $1\leq i \leq d$. Note that this result is true only if all the entries of
  the matrix $q$ are roots of unity. Consider the Laurent polynomial ring associated with
  $\rad(f)$ which is equal to $\bbc[t_1^{\pm m_1},\cdots,t_d^{\pm m_d}] = Z(\bbcq)$ = the center of $\bbcq$.
  Let $A = \bbc[s_1^{\pm1},\cdots, s_d^{\pm1}]$ be a Laurent polynomial ring, where $s_i = t_i^{m_i}$ for
  $1 \leq i \leq d$. To avoid notational confusion, we will use the notation $d(u,r)$ for the derivations of
  $\Der(A)$, where $d(u,r) = t^{r}\sum_{i = 1}^{d}{u_i \partial_i}$, $u = (u_1, u_2,\cdots, u_d)\in \bbcd$
  and $r \in \bbzd$. Let $W$ denote the Lie subalgebra of $\Der(\bbcq)$ generated by the elements $D(u,r)$, where
  $u \in \bbcd, r\in \rad(f)$.
  We have the following proposition:
  \begin{prop}
   $\Der(A)\ltimes A \cong W \ltimes Z(\bbcq)$ with a map $\phi$ defined as
   $\phi(d(u,r)+t^s ) = \sqrt{\sigma(r,r)}D(u,r)+\sqrt{\sigma(s,s)}t^s$.
  \end{prop}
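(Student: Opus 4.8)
The plan is to check that the stated map $\phi$ is a degree-preserving Lie algebra isomorphism. First I would record the structures of the two sides as $\rad(f)$-graded Lie algebras. Since $A=Z(\bbcq)=\bbc[t^n:n\in\rad(f)]$ is an ordinary commutative Laurent polynomial ring, $\Der(A)\ltimes A$ is $\rad(f)$-graded with homogeneous brackets $[d(u,r),d(u',r')]=d((u,r')u'-(u',r)u,\,r+r')$, $[d(u,r),t^s]=(u,s)t^{r+s}$ and $[t^s,t^{s'}]=0$; its degree-$r$ component is $\{d(u,r):u\in\bbcd\}\oplus\bbc t^r$, of dimension $d+1$. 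On the other side, $W=\bigoplus_{r\in\rad(f)}\{D(u,r):u\in\bbcd\}$ and $Z(\bbcq)=\bigoplus_{r\in\rad(f)}\bbc t^r$, with brackets as in Proposition~\ref{pr2} and the $W$-bracket $[D(u,r),D(u',r')]=D(\sigma(r,r')((u,r')u'-(u',r)u),\,r+r')$ recorded before Proposition~\ref{pr1}; again each degree-$r$ component has dimension $d+1$. Since $\phi$ preserves degree and acts on each component by the nonzero scalar $\sqrt{\sigma(r,r)}$ times the obvious identifications $d(u,r)\leftrightarrow D(u,r)$ and $t^r\leftrightarrow t^r$, it is automatically a graded linear bijection. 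Thus the whole content lies in (i) making coherent sense of the square roots and (ii) verifying that $\phi$ respects brackets.

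For step (i), the key identity is that for $r,s\in\rad(f)$ one has $\sigma(r+s,r+s)=\sigma(r,r)\sigma(s,s)\sigma(r,s)^2$. This follows from the bi-multiplicativity of $\sigma$ (property (1) of Section~\ref{sec1}) together with the symmetry $\sigma(r,s)=\sigma(s,r)$, which holds on $\rad(f)$ because $r\in\rad(f)$ forces $f(r,s)=1$. The identity shows that the symmetric bi-multiplicative form $\sigma|_{\rad(f)}$ admits a quadratic refinement: writing $\rad(f)=\bigoplus_i\bbz g_i$ with $g_i=m_ie_i$, I fix $a_i\in\bbc^{\ast}$ with $a_i^2=\sigma(g_i,g_i)$ and define, for $r=\sum_i r_ig_i$, $\beta(r):=\prod_i a_i^{r_i^2}\prod_{i<j}\sigma(g_i,g_j)^{r_ir_j}$. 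A direct computation using bi-multiplicativity and the symmetry then gives both $\beta(r)^2=\sigma(r,r)$ and, crucially, the cocycle identity $\beta(r+s)=\beta(r)\beta(s)\sigma(r,s)$; I then set $\sqrt{\sigma(r,r)}:=\beta(r)$, so that $\phi$ is well defined. I expect this to be the only genuine subtlety: the naive square roots are determined only up to sign, and what $\phi$ actually requires is the cocycle identity itself, not merely its square (the identity $\sigma(r+s,r+s)=\sigma(r,r)\sigma(s,s)\sigma(r,s)^2$). The signs must therefore be pinned down coherently, which is exactly what the explicit $\beta$ achieves; any choice of the $a_i$ yields an admissible $\phi$.

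For step (ii), with $\beta$ in hand the three bracket checks are routine and each collapses to one of the two facts just established. For $[d(u,r),d(u',r')]$, the $\sigma(r,r')$-twist appearing in the $W$-bracket combines with the factor $\beta(r)\beta(r')$, and the cocycle identity produces exactly $\beta(r+r')$, matching $\phi$ applied to the right-hand side; the bracket $[d(u,r),t^s]$ reduces to the same cocycle identity through the factor $\sigma(r,s)$ in $[D(u,r),t^s]$; and $[t^s,t^{s'}]=0$ is matched because $\sigma(s,s')=\sigma(s',s)$ on $\rad(f)$ forces $[t^s,t^{s'}]=(\sigma(s,s')-\sigma(s',s))t^{s+s'}=0$ on the target as well. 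Combining (i) and (ii), $\phi$ is a degree-preserving bijective Lie algebra homomorphism, hence the desired isomorphism $\Der(A)\ltimes A\cong W\ltimes Z(\bbcq)$.
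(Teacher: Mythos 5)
Your proof is correct, but it takes a genuinely different route from the paper, which offers no argument at all for this proposition: the paper's ``proof'' is simply a citation to Lemma 2.3 of \cite{GLKZ}, whereas you give a self-contained verification. You correctly isolate the only nontrivial point, namely that the symbols $\sqrt{\sigma(r,r)}$ cannot be arbitrary square roots but must form a coherent system $\beta:\rad(f)\rightarrow\bbc^{\ast}$ satisfying both $\beta(r)^2=\sigma(r,r)$ and the cocycle identity $\beta(r+s)=\beta(r)\beta(s)\sigma(r,s)$, and your explicit construction of $\beta$ from the basis $g_i=m_ie_i$ of $\rad(f)$ (using bimultiplicativity of $\sigma$ and its symmetry on $\rad(f)$, which follows from $f\equiv 1$ there) achieves exactly this; the three bracket checks then collapse to that identity as you describe. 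One remark, though: in this paper's rational setting the subtlety you guard against is actually vacuous. Since $g_i=m_ie_i\in\rad(f)$, one has $f(e_i,e_j)^{m_i}=q_{ij}^{m_i}=1$ for all $j$, hence $\sigma(g_i,g_j)$, which equals $q_{ij}^{m_im_j}$ or $1$, is always $1$, and bimultiplicativity then forces $\sigma(r,s)=1$ for all $r,s\in\rad(f)$. Consequently every $\sqrt{\sigma(r,r)}$ may be taken to be $1$, the cocycle identity holds trivially, and $\phi$ reduces to the obvious degree-preserving identification $d(u,r)\mapsto D(u,r)$, $t^s\mapsto t^s$. So your argument is heavier than the situation demands, but what it buys is robustness: it would remain valid in any setting where $\sigma$ restricts nontrivially to the radical, while the paper's citation hides both the cocycle issue and the fact that here it disappears.
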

 \begin{proof}
   See Lemma 2.3 of \cite{GLKZ}.
\end{proof}

  Using the above proposition we see that from \cite{RE}, $\tp/I_2^{\prime} \cong gl_d$, where $I_2^{\prime}$
  is the ideal of $\tp$ spanned by the elements $\tp(u,r,n_1,n_2)$ which are defined as follows:
  $\tp(u,r,n_1,n_2) = \tp(u,r) - \tp(u,r + n_1) - \tp(u,r+ n_2) + \tp(u,r + n_1 + n_2)$.
  Again recall from \cite{RE} that the Lie subalgebra $T$ is isomorphic to $\tp$ under the map $\phi$
  where T is the Lie subalgebra spanned by the elements $T(u,r) = t^{-r}d(u,r) - d(u,0)$. Using this isomorphism 
  $\phi$ we see that $\phi(T(u,r)) = \sigma(r,r)\tp(u,r)$, so $\phi^{-1}(\tp(u,r)) = \sigma(r,r)^{-1}T(u,r) =
  \sigma(-r,r)T(u,r)$. As $\tp/I_2^{\prime} \cong gl_d$, we see that by the same argument as in \cite{RE},
  we have $V^{\prime}_{r} \cong V \otimes t^{r}$, where $V$ is a finite dimensional irreducible representation of $gl_d$.
  So we have $V^{\prime} \cong V \otimes \bbcq$. The isomorphism from $\tp/I_2^{\prime}$ to $gl_d$ is given by the 
  map $\pi^{\prime}$ defined by $\pi^{\prime}(\tp(e_i, e_j)) = \sigma(e_j,e_i)^{-1}E_{ji} = E_{ji}$, as $\sigma(e_i,e_j) = 1$.
  
  Now let us calculate the action of $D(u,r)$ on $V^{\prime}_n = V \otimes t^{n} := V^{\prime}(n)$.
  First consider
  \begin{align*}
  \tp(u,r)v(n) &= \sigma(-r,r)T(u,r)v(n)\\
  &=\sigma(-r,r)\sum_{i,j}{u_i r_jT(e_i, e_j)v(n)}   \\
  &= \sigma(-r,r)\sum_{i,j}{u_i r_j E_{ji}v(n)}.
  \end{align*}
  So we have
  $$t^{-r}D(u,r)v(n) = \sigma(-r,r)D(u,0)v(n) + \sigma(-r,r)\sum_{i,j}{u_i r_j E_{ji}}v(n)$$
   multiply  by $ t^{r}$  we  get
 \begin{align*}
   \sigma(-r,r)D(u,r)v(n) &= \sigma(-r,r)(u,n+\alpha)\sigma(r,n)v(n+r)\\
   &+\sigma(-r,r)(\sum_{i,j}{u_i r_j E_{ji}})\sigma(r,n)v(n+r).
 \end{align*}
  So we get
  $$ D(u,r)v(n) = \sigma(r,n)[(u,n + \alpha) + r u^{T}]v(n+r).$$
  
  Now as by Proposition \ref{prop0} we have $V^{\prime}_{r} \cong V^{\prime}_{s}$ as $T^{\prime}$-module.
  We identify $V^{\prime}_{s}$ as $t^{s}(V^{\prime}_{0})$, i.e., $t^{s}(V^{\prime}_{0}) = V^{\prime}_{s}$ for all 
  $s \in \bbzd$. Now consider
  \begin{align*}
   t^{m}t^{n}v(0) &= t^{m}v(n) , \,\,\mbox{by \,\, identification},\\
    \sigma(m,n)t^{m+n}v(0) &= t^{m}v(n) ,\\
    \sigma(m,n)v(m+n) &= t^{m}v(n) .
  \end{align*}
So far we have proved the following:
\begin{prop}
 Let $V^{\prime}$ be an irreducible $\bbzd$-graded $\mg$-module with finite dimensional weight 
 spaces with respect to $\tilde{\h}$, with associative $\bbcqa$ and anti-associative $\bbcqb$ action. 
 Then $V^{\prime} \cong V \otimes \bbcq$, where $V$ is a finite dimensional irreducible $gl_d$-module. 
 The actions of $D(u,r)$ and $\bbcq$ on $V^{\prime}$
 are given by the following:
 \begin{enumerate}
  \item  $D(u,r)v(n) = \sigma(r,n)((u,n+\alpha) + r u^{T})v(r+n)$, $\mathrm{for\,\, some\,\, fixed}$ $\alpha \in \bbc^{d};$
  \item $t^{m}v(n) = \sigma(m,n)v(m+n)$ , where $u \in \bbcd, m,n \in \bbzd, r\in \rad(f)$, and $ v(n) = v \otimes t^{n}$.
 \end{enumerate}

\end{prop}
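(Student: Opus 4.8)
The plan is to reduce the entire classification to the finite-dimensional representation theory of $gl_d$ by analyzing the degree-zero part of the action on weight spaces. First I would use the maximal abelian subalgebra $\tilde{\h}$ to write $V^{\prime} = \oplus_{r \in \bbzd} V^{\prime}_r$, where each weight space is finite dimensional by hypothesis. A standard graded-module argument (as in the proof of Proposition \ref{prop0}) shows that each $V^{\prime}_r$ is an irreducible module for the degree-zero subalgebra $U_0$ of $U_1 = U(\mg)/L(\mg)$, and that $U_0$ is generated by the elements $\tp(u,r)$ with $r \in \rad(f)$ together with the inner elements $t^{-s}\ad\,t^{s}$ with $s \notin \rad(f)$.

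Next I would isolate the role of the inner part. By Proposition \ref{thm1} the operators $t^{-s}\ad\,t^{s}$ commute among themselves, and by Proposition \ref{pr4} they span an abelian ideal $I$ of the Lie algebra $\mg^{\prime}$ generated by $\tp$ and $I$. Applying Lemma \ref{thm3} to the finite-dimensional irreducible $\mg^{\prime}$-module $V^{\prime}_r$, the image of $I$ must lie in the at-most-one-dimensional center of the reductive Lie algebra $\rho(\mg^{\prime})$; hence every $t^{-s}\ad\,t^{s}$ acts as a scalar and $V^{\prime}_r$ is already irreducible as a $\tp$-module. This collapses the problem onto $\tp$ alone.

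The main work, and the step I expect to be the main obstacle, is to identify $\tp$ modulo its relevant ideal with $gl_d$ and to transport the module structure with the correct scalars. Here I would use the isomorphism $\Der(A) \ltimes A \cong W \ltimes Z(\bbcq)$ afforded by $\phi$, together with Rao's classification in \cite{RE}: under $\phi$ the subalgebra $T$ spanned by $T(u,r) = t^{-r}d(u,r) - d(u,0)$ satisfies $\phi(T(u,r)) = \sigma(r,r)\tp(u,r)$, so that $\tp/I_2^{\prime} \cong T/I_2 \cong gl_d$, where $I_2^{\prime}$ is spanned by the $\tp(u,r,n_1,n_2)$. Since $V^{\prime}_r$ is finite dimensional, the ideal $I_2^{\prime}$ acts trivially exactly as in \cite{RE}, so $V^{\prime}_r$ is a finite-dimensional irreducible $gl_d$-module $V$; Proposition \ref{prop0}(5) then upgrades this to $V^{\prime}_r \cong V \otimes t^{r}$ for every $r$, giving $V^{\prime} \cong V \otimes \bbcq$ as graded vector spaces. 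The delicate point throughout is the bookkeeping of the cocycle factors $\sigma(r,r)$ and $\sigma(-r,r)$ introduced by $\phi$, since these must combine to yield precisely the stated formulas rather than the commutative ones.

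Finally I would read off the two explicit actions. Using $\phi^{-1}(\tp(u,r)) = \sigma(-r,r)T(u,r)$ and the identification $\pi^{\prime}(\tp(e_i,e_j)) = E_{ji}$, the defining relation $\tp(u,r) = t^{-r}D(u,r) - \sigma(-r,r)D(u,0)$ lets me solve for the action and obtain $D(u,r)v(n) = \sigma(r,n)[(u,n+\alpha) + r u^{T}]v(n+r)$, with $\alpha \in \bbcd$ fixed by the weight of $V^{\prime}_0$. For the $\bbcq$-action I would exploit the identification $V^{\prime}_{s} = t^{s} V^{\prime}_0$ together with the associativity of $\bbcqa$, so that $t^{m}t^{n}v(0) = \sigma(m,n)t^{m+n}v(0)$ forces $t^{m}v(n) = \sigma(m,n)v(m+n)$, completing the proof of the proposition.
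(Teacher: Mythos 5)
Your proposal is correct and follows essentially the same route as the paper: weight-space decomposition under $\tilde{\h}$, irreducibility of each $V^{\prime}_r$ under $U_0$, the abelian ideal $I$ of inner elements $t^{-s}\ad\,t^{s}$ forced into scalars by the Humphreys lemma (Lemma \ref{thm3}), the identification $\tp/I_2^{\prime} \cong gl_d$ via the isomorphism $\phi$ with $\Der(A)\ltimes A$ and Rao's classification in \cite{RE}, and finally reading off the $D(u,r)$ and $t^m$ actions from $\tp(u,r) = t^{-r}D(u,r) - \sigma(-r,r)D(u,0)$ and the identification $V^{\prime}_s = t^s V^{\prime}_0$ with associativity of $\bbcqa$. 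The cocycle bookkeeping you flag as the delicate point ($\sigma(r,r)$, $\sigma(-r,r)$) is handled in the paper exactly as you describe.
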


 \section{ $\ad$ action on $V^{\prime}$ and the proof of Theorem \ref{prop3}}\label{sec3}
  
  To complete the proof of Theorem \ref{prop3} we need to determine the 
  action of $\ad$ on $V^{\prime}(n)$, which will be done in this section.
  As by the Lemma \ref{thm3} $t^{-s}\ad\,t^{s}$ acts as a scalar on $V^{\prime}(n)$. 
  Let $t^{-s}\ad\,t^{s}v(n) = \lambda(s,n)v(n)$. 
  
  \begin{prop}
   $\la(s,r)= f(r,s)\la(s,0) + \sigma(-s,s)[1 - f(r,s)]$.
  \end{prop}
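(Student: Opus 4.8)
The plan is to compute $\ad\,t^{s}v(r)$ in two different ways and compare. First I would convert the scalar relation $t^{-s}\ad\,t^{s}v(n) = \la(s,n)v(n)$ into an explicit formula for $\ad\,t^{s}v(n)$ itself. Writing $\ad\,t^{s}v(n) = w(s+n)$ for some $w \in V$ and applying $t^{-s}$, the associative action $t^{m}v(k) = \sigma(m,k)v(m+k)$ gives $t^{-s}w(s+n) = \sigma(-s,s+n)w(n)$, so comparing with $\la(s,n)v(n)$ yields
\[
\ad\,t^{s}v(n) = \sigma(-s,s+n)^{-1}\la(s,n)v(s+n) = \sigma(-s,s)^{-1}\sigma(-s,n)^{-1}\la(s,n)v(s+n),
\]
where the last equality uses bi-multiplicativity of $\sigma$ (property (1) with one argument taken to be $0$). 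Already at this point the normalization factor $\sigma(-s,s)^{-1}$ enters, the reason being that $t^{s}t^{-s} = \sigma(s,-s) = \sigma(-s,s)$ is in general not equal to $1$.

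Next I would fix $r \in \bbzd$ and use the identification $v(r) = t^{r}v(0)$ (valid since $\sigma(r,0)=1$). On one hand, the displayed formula with $n = r$ gives $\ad\,t^{s}v(r) = \sigma(-s,s)^{-1}\sigma(-s,r)^{-1}\la(s,r)v(s+r)$. On the other hand, writing $\ad\,t^{s}t^{r} = t^{r}\ad\,t^{s} + [\ad\,t^{s},t^{r}]$ and invoking the bracket $[\ad\,t^{s},t^{r}] = (\sigma(s,r)-\sigma(r,s))t^{s+r}$ from Proposition \ref{pr2}(2), together with the special case $\ad\,t^{s}v(0) = \sigma(-s,s)^{-1}\la(s,0)v(s)$ and the associative action of $t^{r}$, I obtain
\[
\ad\,t^{s}v(r) = \bigl[\sigma(-s,s)^{-1}\la(s,0)\sigma(r,s) + \sigma(s,r)-\sigma(r,s)\bigr]v(s+r).
\]

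Finally I would equate the coefficients of $v(s+r)$ in the two expressions and multiply through by $\sigma(-s,s)\sigma(-s,r)$, which gives
\[
\la(s,r) = \sigma(-s,r)\sigma(r,s)\la(s,0) + \sigma(-s,s)\sigma(-s,r)\bigl(\sigma(s,r)-\sigma(r,s)\bigr).
\]
The computation then collapses using two consequences of bi-multiplicativity: $\sigma(-s,r) = \sigma(s,r)^{-1}$ (since $\sigma(s,r)\sigma(-s,r) = \sigma(0,r) = 1$), whence $\sigma(-s,r)\sigma(r,s) = \sigma(r,s)\sigma(s,r)^{-1} = f(r,s)$ and $\sigma(-s,r)\sigma(s,r) = 1$. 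Substituting, the first term becomes $f(r,s)\la(s,0)$ and the bracketed factor becomes $1 - f(r,s)$, yielding exactly $\la(s,r) = f(r,s)\la(s,0) + \sigma(-s,s)[1-f(r,s)]$. The one step demanding care, and the only genuine obstacle, is tracking the normalization $\sigma(-s,s)^{-1}$ correctly through both computations; it is precisely this quantum-torus factor that produces the coefficient $\sigma(-s,s)$ in front of $1-f(r,s)$ rather than a bare $1$.
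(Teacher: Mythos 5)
Your proof is correct and takes essentially the same route as the paper: the paper's one-line proof expands $t^{-s}\ad\,t^{s}t^{r}v(n) = \bigl(t^{r}t^{-s}\ad\,t^{s} + [t^{-s}\ad\,t^{s},t^{r}]\bigr)v(n)$, which is exactly your commutator computation with the factor $t^{-s}$ kept attached to $\ad\,t^{s}$ rather than peeled off first via the explicit formula for $\ad\,t^{s}v(n)$. Your bookkeeping of the normalization $\sigma(-s,s)$ just makes explicit what the paper leaves to the reader.
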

  \begin{proof}
   Using
   \begin{align*}
   t^{-s}\ad \,t^{s}t^{r}v(n) &= (t^{r}t^{-s}\ad \,t^{s} + [t^{-s}\ad \,t^{s},t^{r}])v(n)
 \end{align*}
    we get the desired identity for $\la(s,r)$.
  \end{proof}

  \begin{prop}
   $\ad\, t^{s}v(n) = (\sigma(s,n) - g(s)\sigma(n,s))v(n+s)$, where 
   $ g(s) = -\sigma(s,s)\la(s,0) +1 $.
  \end{prop}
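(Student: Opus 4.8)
The plan is to recover the operator $\ad\,t^{s}$ (for $s\notin\rad(f)$) from the scalar operator $t^{-s}\ad\,t^{s}$ by left-multiplying by $t^{s}$ and invoking the associativity of the $\bbcqa$-action together with the normalization $t^{0}=1$. Concretely, I would start from the defining relation $t^{-s}\ad\,t^{s}v(n)=\la(s,n)v(n)$, which is legitimate because $t^{-s}\ad\,t^{s}$ acts as a scalar on each $V^{\prime}(n)$ by Lemma \ref{thm3}, and apply $t^{s}$ to both sides. On the left, associativity gives $t^{s}t^{-s}=\sigma(s,-s)\,t^{0}=\sigma(s,-s)$ as an operator, so the left side is $\sigma(s,-s)\,\ad\,t^{s}v(n)$; on the right, the already-established action $t^{s}v(n)=\sigma(s,n)v(n+s)$ turns $t^{s}\bigl(\la(s,n)v(n)\bigr)$ into $\la(s,n)\sigma(s,n)v(n+s)$.

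Solving for $\ad\,t^{s}v(n)$ then yields $\ad\,t^{s}v(n)=\sigma(s,-s)^{-1}\sigma(s,n)\la(s,n)\,v(n+s)$. The next step is to simplify the leading scalar through the explicit cocycle $\sigma(n,m)=\prod_{i<j}q_{ji}^{n_{j}m_{i}}$, which immediately gives $\sigma(s,-s)=\sigma(-s,s)=\sigma(s,s)^{-1}$ (in agreement with $f(s,-s)=1$). Hence $\ad\,t^{s}v(n)=\sigma(s,s)\sigma(s,n)\la(s,n)\,v(n+s)$.

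Finally I substitute the formula $\la(s,n)=f(n,s)\la(s,0)+\sigma(-s,s)\bigl(1-f(n,s)\bigr)$ from the preceding proposition, replace $\sigma(-s,s)$ by $\sigma(s,s)^{-1}$, and use $f(n,s)=\sigma(n,s)\sigma(s,n)^{-1}$ so that $\sigma(s,n)f(n,s)=\sigma(n,s)$. Collecting terms, the coefficient of $v(n+s)$ collapses to $\sigma(s,n)-\bigl(1-\sigma(s,s)\la(s,0)\bigr)\sigma(n,s)$, and writing $g(s)=-\sigma(s,s)\la(s,0)+1$ gives precisely $\ad\,t^{s}v(n)=\bigl(\sigma(s,n)-g(s)\sigma(n,s)\bigr)v(n+s)$.

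The computation is short, so the main obstacle is not length but bookkeeping: one must correctly identify the scalar $t^{s}t^{-s}=\sigma(s,-s)=\sigma(s,s)^{-1}$ from associativity and $t^{0}=1$, and reconcile the two expressions $\sigma(s,-s)$ and $\sigma(-s,s)$ that appear on opposite sides through $f(s,-s)=1$ before the final algebraic collapse. It is also worth verifying the degenerate range $s\in\rad(f)$ as a consistency check: there $\ad\,t^{s}=0$ forces $\la(s,\cdot)=0$ and hence $g(s)=1$, while $f(s,n)=1$ makes $\sigma(s,n)=\sigma(n,s)$, so the right-hand side vanishes as well and the stated formula remains valid.
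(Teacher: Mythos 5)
Your proposal is correct and follows essentially the same route as the paper: starting from $t^{-s}\ad\,t^{s}v(n)=\la(s,n)v(n)$ with the formula for $\la(s,n)$ from the preceding proposition, multiplying by $t^{s}$ via the associative relation $t^{m}t^{n}=\sigma(m,n)t^{m+n}$, and collapsing the scalars using $\sigma(s,-s)=\sigma(-s,s)=\sigma(s,s)^{-1}$ and $f(n,s)\sigma(s,n)=\sigma(n,s)$. Your extra consistency check for $s\in\rad(f)$ is a harmless bonus not present in the paper.
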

  \begin{proof}
   By above proposition we have 
   \begin{align*}
   t^{-s}\ad\,t^{s}v(n) = [f(n,s)\la(s,0) + \sigma(-s,s)(1 - f(n,s))]v(n)
   \end{align*}
   Multiply by $t^{s}$ and using the relation $t^{m}t^{n} = \sigma(m,n)t^{m+n}$, we get
   \begin{align*}
   \ad\,t^{s}v(n) &= [\sigma(s,s)f(n,s)\la(s,0) + (1 - f(n,s))]\sigma(s,n)v(n+s)\\
   &= [\sigma(s,n) - g(s)\sigma(n,s)]v(n+s).
   \end{align*}
  \end{proof}

\begin{prop}
 Let $g(s) = -\sigma(s,s)\la(s,0) +1 $ for $s \notin \rad(f)$ and $g(s)=1$ otherwise. Then
 $g(s+r) = g(s)g(r) \,\forall \, s,r \in \bbzd$.
\end{prop}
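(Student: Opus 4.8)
The strategy is to regard the relation \eqref{neq} as an operator identity on $V^{\prime}$, apply it to an arbitrary weight vector $v(n)$, and read off the scalar multiplying $v(n+r+s)$. The only ingredients are the action formula $\ad\,t^{s}v(n)=(\sigma(s,n)-g(s)\sigma(n,s))v(n+s)$ from the previous proposition, the associative action $t^{m}v(n)=\sigma(m,n)v(m+n)$, and the bimultiplicativity of $\sigma$, namely $\sigma(x+y,z)=\sigma(x,z)\sigma(y,z)$ and $\sigma(x,y+z)=\sigma(x,y)\sigma(x,z)$, which is the $r=0$ (resp.\ $m=0$) specialization of property $(1)$ of Section \ref{sec1}. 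I first observe that the action formula is in fact valid for every $s\in\bbzd$ under the stated convention $g(s)=1$ on $\rad(f)$: there $\ad\,t^{s}=0$, while $\sigma(s,n)=\sigma(n,s)$ because $f(s,n)=1$, so both sides vanish.

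The core computation, carried out for $r,s\notin\rad(f)$, evaluates the four terms of \eqref{neq} on $v(n)$. Writing $a=\sigma(s,n)$, $a^{\prime}=\sigma(n,s)$, $b=\sigma(r,n)$, $b^{\prime}=\sigma(n,r)$, $c=\sigma(s,r)$, $c^{\prime}=\sigma(r,s)$ and using bimultiplicativity to split $\sigma(r,n+s)=bc^{\prime}$, $\sigma(n+s,r)=b^{\prime}c$, $\sigma(s,n+r)=ac$, $\sigma(r+s,n)=ab$, $\sigma(n,r+s)=a^{\prime}b^{\prime}$, the coefficient of $v(n+r+s)$ produced by $\ad\,t^{r}\ad\,t^{s}$ is $(a-g(s)a^{\prime})(bc^{\prime}-g(r)b^{\prime}c)$, that of $t^{r}\ad\,t^{s}$ is $(a-g(s)a^{\prime})bc^{\prime}$, that of $t^{s}\ad\,t^{r}$ is $(b-g(r)b^{\prime})ac$, and that of $\sigma(s,r)\,\ad\,t^{r+s}$ is $c(ab-g(r+s)a^{\prime}b^{\prime})$. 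I expect the bookkeeping of these products to be the only place where care is required.

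Assembling the four coefficients according to \eqref{neq}, the terms $abc^{\prime}$, $g(r)ab^{\prime}c$, $g(s)a^{\prime}bc^{\prime}$ and $abc$ cancel in pairs, leaving the single relation
\[
\bigl(g(s)g(r)-g(r+s)\bigr)\,\sigma(n,s)\sigma(n,r)\sigma(s,r)=0.
\]
Since the entries of $q$ are roots of unity, every value of $\sigma$ lies in $\bbc^{\ast}$, so the factor $\sigma(n,s)\sigma(n,r)\sigma(s,r)$ is nonzero and division yields $g(r+s)=g(r)g(s)$.

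It remains to dispose of the degenerate cases. If both $r,s\in\rad(f)$, then $r+s\in\rad(f)$ and all three values of $g$ equal $1$, so the identity is trivial. If exactly one of them, say $s$, lies in $\rad(f)$, then $r+s\notin\rad(f)$ since $\rad(f)$ is a subgroup, and specializing \eqref{neq} with $\ad\,t^{s}=0$ collapses the same computation to $(g(r)-g(r+s))\sigma(n,r)\sigma(n,s)=0$, giving $g(r+s)=g(r)=g(r)g(s)$. This covers all cases.
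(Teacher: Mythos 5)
Your proof is correct and is essentially the paper's own argument made explicit: the paper's proof just cites the identity of Proposition \ref{thm1} (whose real content is the relation \eqref{neq}, derived from the associativity of $\bbcqa$ and anti-associativity of $\bbcqb$) and declares the rest a straightforward calculation, and your coefficient comparison on $v(n)$ --- using the derived action formulas, bimultiplicativity of $\sigma$, and the nonvanishing of $\sigma$ --- is precisely that calculation. The only (harmless) slip is in your mixed case, where the leftover nonzero factor is $\sigma(n,s)\sigma(n,r)\sigma(s,r)$ rather than $\sigma(n,r)\sigma(n,s)$; since every value of $\sigma$ is a root of unity, hence nonzero, the conclusion $g(r+s)=g(r)g(s)$ is unaffected.
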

\begin{proof}
 Using the identity \ref{thm1}, it is straight forward
 calculation to show that $g$ satisfies the desired conditions. 
 \end{proof}
 
Now denote the $\Der(\bbcq)$-module $V^{\prime}$  with the above action by $G^{\alpha}_{g}(V)$.
Lin and Tan [\cite{ST}, 2004] proved that $V \otimes \bbcq = V(\psi,b) \otimes \bbcq$
is a completely reducible as $Der(\bbcq)$-module unless $(\psi,b) = (\delta_k , k), 1 \leq k \leq d-1$ or
$(\psi,b) = (0,b)$ with the following actions:
\begin{enumerate}
 \item $D(u,r)v(n) = \sigma(r,n)((u,n+\alpha) + r u^{T})v(r+n);$
 \item $\ad\,t^{s}v(n) = (\sigma(s,n)g(s) - \sigma(n,s))v(n+s),$
\end{enumerate}
where $g(s+r) = g(s)g(r) \,\forall \, s,r \in \bbzd$ and $g(r)=1 \, \forall \,\, r\in \rad(f)$. We denote these 
$\Der(\bbcq)$-modules  by $F_{g}^{\alpha}(V)$. Then we have the following proposition:

\begin{prop}\label{prop8}
 $G^{\alpha}_{g}(V) \cong F_{g^{-1}}^{\alpha}(V)$ as a $\Der(\bbcq)$-module.
\end{prop}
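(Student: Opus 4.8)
The plan is to write down an explicit grading-rescaling isomorphism. Both $G^{\alpha}_{g}(V)$ and $F_{g^{-1}}^{\alpha}(V)$ live on the same underlying space $V \otimes \bbcq$ and have \emph{identical} $D(u,r)$-actions; they differ only in the $\ad\,t^{s}$-action. I would therefore search for an isomorphism of the form $\Phi(v(n)) = c(n)\,v(n)$, acting as a nonzero scalar $c(n) \in \bbc$ on each weight space $V^{\prime}(n) = V \otimes t^{n}$. Since the preceding proposition shows $g \colon \bbzd \to \bbc$ is a group homomorphism (so $g(0)=1$ and $g(s)g(-s)=1$, whence every $g(n) \neq 0$), any such $\Phi$ is automatically a linear bijection, and the whole task reduces to choosing $c$ so that $\Phi$ intertwines the two $\ad$-actions.

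First I would match the $\ad$-actions. Recalling that in $G^{\alpha}_{g}(V)$ one has $\ad\,t^{s}v(n) = (\sigma(s,n) - g(s)\sigma(n,s))v(n+s)$ while in $F_{g^{-1}}^{\alpha}(V)$ one has $\ad\,t^{s}v(n) = (g(s)^{-1}\sigma(s,n) - \sigma(n,s))v(n+s)$, comparing $\Phi(\ad\,t^{s}\cdot v(n))$ with $\ad\,t^{s}\cdot\Phi(v(n))$ turns the intertwining requirement into
$$c(n+s)\bigl(\sigma(s,n)-g(s)\sigma(n,s)\bigr) = c(n)\bigl(g(s)^{-1}\sigma(s,n)-\sigma(n,s)\bigr).$$
This is solved by imposing $c(n+s) = g(s)^{-1}c(n)$, for then the left side becomes $c(n)\bigl(g(s)^{-1}\sigma(s,n)-\sigma(n,s)\bigr)$, exactly the right side. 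Iterating this relation from $n=0$ and normalising $c(0)=1$ forces $c(n) = g(n)^{-1}$; conversely this candidate is globally well defined precisely because $g(n+s)=g(n)g(s)$, the multiplicativity established just above. Hence I would simply set $\Phi(v(n)) = g(n)^{-1}v(n)$.

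It then remains to verify the $D(u,r)$-action, which is where the choice of $g^{-1}$ (rather than $g$) quietly pays off. Since this action is the same in both modules and $r \in \rad(f)$ gives $g(r)=1$, one gets $c(n+r) = g(n+r)^{-1} = g(r)^{-1}g(n)^{-1} = c(n)$, so the scalar is unchanged under the shift by $r$; moreover the $gl_d$-part $(u,n+\alpha)+ru^{T}$ acts linearly on $V$ and so commutes with scalar multiplication. Thus $\Phi\bigl(D(u,r)\cdot v(n)\bigr) = D(u,r)\cdot\Phi(v(n))$ holds automatically, and together with the $\ad$-computation this shows $\Phi$ is a $\Der(\bbcq)$-module isomorphism. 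I do not expect a genuine obstacle here: the entire content is the single observation that the correct rescaling is $c(n)=g(n)^{-1}$, with its consistency guaranteed by the multiplicativity of $g$. The only point requiring care is keeping straight the convention for $F_{g^{-1}}^{\alpha}$, namely that its $\ad$-action carries $g^{-1}$ in the $\sigma(s,n)$ slot, so that the displayed identity closes up; once that convention is fixed the verification is a one-line computation on each weight space.
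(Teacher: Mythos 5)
Your proposal is correct and follows essentially the same route as the paper: the paper's proof also defines the diagonal rescaling $\Psi(v(n)) = g(n)^{-1}v(n)$ (well defined and invertible since multiplicativity of $g$ forces $g(n) \neq 0$) and checks that it intertwines both the $D(u,r)$- and $\ad\,t^{s}$-actions. The only difference is presentational — you derive the scalar $c(n)=g(n)^{-1}$ from the intertwining equation rather than writing it down at the outset — so there is nothing further to add.
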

\begin{proof}
 First we note that $g(s) \neq 0$ for all $s \in \bbzd$. Now define a map 
 $\Psi :G^{\alpha}_{g}(V) \rightarrow  F_{g^{-1}}^{\alpha}(V)$ by
 $\Psi(v(n)) = g(n)^{-1}v(n)$. Now it is easy to prove the following:
 \begin{enumerate}
  \item $\Psi(D(u,r)v(n)) = D(u,r)\Psi(v(n))$,
  \item $\Psi(\ad \, t^{s}v(n)) = \ad \, t^{s} \Psi(v(n))$.
 \end{enumerate}
 \end{proof}
 
 To prove Theorem \ref{prop3} we have to prove that $G_{g}^{\alpha}(V) \cong F_{l}^{\beta}(V)$,  where
 $\beta \in \bbcd$ and $l$ is the constant function $1$ on $\bbzd$. To prove this we invoke \cite{KZ} 
 for the following result:
 
 \begin{thm}[Theorem 3.1,\cite{KZ}]\label{thm5}
 Let $g: \bbzd \rightarrow \bbc^{*}$  be a function satisfying $g(m)g(n) = g(m+n)$ and $g(r) = 1$ for
 any $m,n \in \bbzd$, $r\in \rad(f)$. Let $V$ be a $gl_d$-module. Then there exists $\beta \in \bbcd$
 such that $F_{g}^{\alpha}(V) \cong F_{l}^{\beta}(V)$ as $\Der(\bbcq)$-module, where $l$ denotes the
 constant function which maps all the elements of $\bbzd$ to $1$.
 \end{thm}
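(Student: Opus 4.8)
The plan is to produce an explicit graded intertwiner rather than to re-analyze the submodule structure. Since $g$ is a homomorphism $\bbzd \to \bbc^{*}$ that is trivial on $\rad(f)$, and since $\rad(f) = m_1\bbz e_1\oplus\cdots\oplus m_d\bbz e_d$ has finite index, $g$ descends to a character of the finite abelian group $\Gamma := \bbzd/\rad(f)$. The first step is to note that $f$ likewise descends to a bi-multiplicative pairing $\bar f:\Gamma\times\Gamma\to\bbc^{*}$: indeed $f(n+m,s)=f(n,s)f(m,s)$ and $f(n,r)=1$ whenever $r\in\rad(f)$. I would then observe that $\bar f$ is \emph{nondegenerate}: by the very definition of $\rad(f)$, if $f(\mu,s)=1$ for all $s$ then $\mu\in\rad(f)$, so $\mu\mapsto f(\mu,\cdot)$ is an injection $\Gamma\hookrightarrow\widehat\Gamma$, hence a bijection because $\Gamma$ is finite. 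Applying this to the character $g^{-1}\in\widehat\Gamma$ produces $\mu\in\bbzd$ with
\[ f(\mu,s)=g(s)^{-1}\qquad\text{for all } s\in\bbzd. \]

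With such a $\mu$ fixed, I would set $\beta:=\alpha-\mu$ and define $\Psi: F_g^\alpha(V)\to F_l^\beta(V)$ by $\Psi(v(n))=\sigma(\mu,n)\,v(n+\mu)$. This is a graded linear bijection, since each $\sigma(\mu,n)\neq 0$ and $\Psi$ merely rescales the weight space $V\otimes t^{n}$ and relabels its index by $n+\mu$. It is weight preserving: the weight of $v(n)$ on the left is $n+\alpha$, while the weight of $v(n+\mu)$ on the right is $(n+\mu)+\beta=n+\alpha$. It then remains to check that $\Psi$ intertwines the two actions, the only inputs being the bi-multiplicativity of $\sigma$ and the identity $\sigma(\mu,s)\sigma(s,\mu)^{-1}=f(\mu,s)$.

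For $D(u,r)$ with $r\in\rad(f)$ the verification reduces, after cancelling $\sigma(\mu,n)$, to $\sigma(\mu,r)=\sigma(r,\mu)$, which holds because $f(\mu,r)=1$ for $r\in\rad(f)$; the scalar part matches automatically since $(u,n+\mu+\beta)=(u,n+\alpha)$. For the outer derivations $\ad\,t^{s}$ with $s\notin\rad(f)$, after cancelling $\sigma(\mu,n)$ the intertwining identity becomes
\[ \bigl(\sigma(s,n)g(s)-\sigma(n,s)\bigr)\sigma(\mu,s)=\sigma(s,n)\sigma(s,\mu)-\sigma(n,s)\sigma(\mu,s). \]
Here the two functions $n\mapsto\sigma(s,n)$ and $n\mapsto\sigma(n,s)$ are \emph{distinct} characters of $\bbzd$ when $s\notin\rad(f)$ (their quotient is the nontrivial character $f(s,\cdot)$), hence linearly independent; so I would match their coefficients separately. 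The $\sigma(n,s)$-terms agree identically, while the $\sigma(s,n)$-terms agree precisely when $g(s)\sigma(\mu,s)=\sigma(s,\mu)$, i.e. $g(s)=f(\mu,s)^{-1}$ — exactly how $\mu$ was chosen. This establishes $F_g^\alpha(V)\cong F_l^{\beta}(V)$ with $\beta=\alpha-\mu$, and since the argument never uses finite-dimensionality or irreducibility of $V$, it covers an arbitrary $gl_d$-module $V$ as stated.

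The main obstacle is locating the vector $\mu$, and this is precisely where the hypothesis that every $q_{ij}$ is a root of unity is indispensable: it is what forces $\rad(f)$ to have finite index, so that $\Gamma$ is finite and the injection $\Gamma\hookrightarrow\widehat\Gamma$ induced by $f$ is automatically surjective. Without finiteness one could not guarantee that $g^{-1}$ lies in the image of $\mu\mapsto f(\mu,\cdot)$, and the reduction would collapse. Once $\mu$ is produced, the remaining computations are the bi-multiplicativity bookkeeping above, whose only structural subtlety is the separation of the two independent characters $\sigma(s,n)$ and $\sigma(n,s)$, which is what pins down the defining condition $f(\mu,s)=g(s)^{-1}$.
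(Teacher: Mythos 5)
Your proof is correct, but note that on this particular statement the paper itself offers no proof: it is imported verbatim as Theorem 3.1 of \cite{KZ} and used as a black box, together with Proposition \ref{prop8}, to finish the proof of Theorem \ref{prop3}. So your argument differs from the paper in kind rather than in strategy: it replaces an external citation with a complete, self-contained proof. The mechanism --- descend $g$ and $f$ to the finite group $\Gamma=\bbzd/\rad(f)$ (finiteness is exactly where the root-of-unity hypothesis enters, via $\rad(f)=m_1\bbz e_1\oplus\cdots\oplus m_d\bbz e_d$ of finite index), observe that $\bar\mu\mapsto \bar f(\bar\mu,\cdot)$ is a well-defined injective homomorphism $\Gamma\to\widehat{\Gamma}$ and hence bijective, solve $f(\mu,\cdot)=g^{-1}$, and then write down the shift-and-rescale intertwiner $\Psi(v(n))=\sigma(\mu,n)v(n+\mu)$ with $\beta=\alpha-\mu$ --- checks out in every detail: the $D(u,r)$ case reduces, via bi-multiplicativity of $\sigma$, to $\sigma(\mu,r)=\sigma(r,\mu)$, i.e.\ $f(\mu,r)=1$ for $r\in\rad(f)$; the $\ad\,t^{s}$ case reduces to $g(s)\sigma(\mu,s)=\sigma(s,\mu)$, i.e.\ $g(s)=f(\mu,s)^{-1}$, which is precisely the defining property of $\mu$; and the weight shift is absorbed by $(u,n+\mu+\beta)=(u,n+\alpha)$. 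Two small remarks. First, the linear-independence-of-characters argument is needed only to show that the condition on $\mu$ is \emph{forced}; for the verification itself it suffices to substitute $g(s)=f(\mu,s)^{-1}$ and see that the two sides agree term by term, so that step can be omitted. Second, your argument is very likely close in spirit to the proof inside \cite{KZ} itself (nondegeneracy of $f$ on $\bbzd/\rad(f)$ is the standard tool there); what it buys relative to the paper under review is self-containedness --- the dependence on \cite{KZ} for this step disappears --- and it makes transparent that neither irreducibility nor finite-dimensionality of $V$ is used, exactly as the statement requires.
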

 
 So using Theorem \ref{thm5} and Proposition \ref{prop8} we get $G_{g}^{\alpha}(V) \cong F_{l}^{\beta}(V)$ and
 this completes the proof of Theorem \ref{prop3}.
 
 \begin{rem}
  After finishing this work, we came across a paper by Liu and Zhao, Irreducible Harish-Chandra modules over
  the derivation algebras of rational quantum tori, Glasgow Mathematical Journal Trust 2013, where they
  consider a smaller Lie algebra.
 \end{rem}
$\mathrm{\bf{Acknowledgement}}$

We thank the anonymous referee for invaluable comments and suggestions without which
our paper wouldn't be the same.

\bibliography{theirreduciblemodules.bib}
\bibliographystyle{plain}

\end{document}